\definecolor{ffqqqq}{rgb}{1.,0.,0.}
\definecolor{qqqqff}{rgb}{0.,0.,1.}
\definecolor{uuuuuu}{rgb}{0.26666666666666666,0.26666666666666666,0.26666666666666666}
\newtheorem{thm}{Theorem}%[chapter]
\newtheorem{lemma}[thm]{Lemma}
\newtheorem{prop}[thm]{Proposition}
\newtheorem{cor}[thm]{Corollary}
\newtheorem{rem}[thm]{Remark}
\newcommand\cH{{\mathcal H}}
\newcommand\cK{{\mathcal K}}
\newcommand\cS{{\mathcal S}}
\newcommand\cT{{\mathcal T}}
\newcommand\od{{\overline d}}
\newcommand{\ignore}[1]{}
\title{A plurality problem with three colors and query size three}
\begin{document}

\author{
D\'aniel Gerbner $^{a}$
\and
D\'aniel Lenger $^b$ 
\and
M\'at\'e Vizer $^{a}$}

\date{
\today}

\maketitle

\begin{center}
$^a$ Alfr\'ed R\'enyi Institute of Mathematics, HAS, Budapest, Hungary\\
\medskip
$^b$ Department of Computer Science, ELTE, Budapest, Hungary \\
\medskip
\small \texttt{gerbner@renyi.hu, leda1648@gmail.com, vizermate@gmail.com}
\medskip
\end{center}

\begin{abstract}

The Plurality problem - introduced by Aigner \cite{A2004} - has many variants. In this article we deal with the following version: suppose we are given $n$ balls, each of them colored by one of three colors. A \textit{plurality ball} is one such that its color class is strictly larger than any other color class. Questioner wants to find a plurality ball as soon as possible or state there is no, by asking triplets (or $k$-sets, in general), while Adversary partition the triplets into color classes as an answer for the queries and wants to postpone the possibility of determining a plurality ball (or stating there is no). 

We denote by $A_p(n,3)$ the largest number of queries needed to ask if both play optimally (and Questioner asks triplets). We provide an almost precise result in case of even $n$ by proving that for $n \ge 4$ even we have
$$\frac{3}{4}n-2 \le A_p(n,3) \le \frac{3}{4}n-\frac{1}{2},$$
and for $n \ge 3$ odd we have
$$\frac{3}{4}n-O(\log n) \le A_p(n,3) \le \frac{3}{4}n-\frac{1}{2}.$$

We also prove some bounds on the number of queries needed to ask for larger $k$.

\end{abstract}

\noindent
{\bf Keywords:} Plurality problem, Partition problem, worst case

\noindent
{\bf AMS Subj.\ Class.\ (2010)}: 05D99, 91A05  

\section{Introduction}
In combinatorial search theory the so-called \textbf{Plurality problem} is the following problem:
we are given $n$ indexed balls, say $\{1,2,...,n\}(=:[n])$, each colored with one of $c \ge 2$ colors given at the beginning. A \textit{plurality ball} is one such that its color class is strictly larger than any other color class. The aim is either to decide whether there exists a plurality ball or even to show one (if there exists one).  

We can view this problem as a game played between two players: Questioner and Adversary. The role of Questioner is to ask queries and solve the Plurality problem as soon as possible, while Adversary answers the queries and would like to extend the length of the game as long as possible. In this paper we will be interested in the \textbf{maximal length} of the game, when both players play optimally (i.e. the worst case complexity of the plurality problem).  

In the literature there are two main approaches concerning such combinatorial search problems: the first one is the \textit{adaptive}, when queries in the strategy of Questioner can depend on the answers for the previous queries, while in the \textit{non-adaptive} setting Questioner has to ask all the queries in the beginning. In this article we deal with the adaptive setting.

A \textit{model} of the Plurality problem is defined by the number $n$ of balls, the number $c$ of colors, the possible queries, the possible answers, and whether it is adaptive/non-adaptive.

\vspace{3mm}

If $c=2$, this problem coincide with the so called \textit{Majority problem} (where Questioner should find a ball of the color that occurs more than half of the time) that was raised by J. Moore in 1981 in connection with finding the majority vote among $n$ processors using the minimum number of paired comparisons \cite{M1981}. The \textit{pairing} model of the Majority problem is where the query is a pair of balls and the answer is whether their colors coincide or not. It was investigated by Saks and Werman \cite{SW1991}, who gave a precise result for this problem (later a simpler proof was found by Wiener \cite{W2002}), while generalizations for larger query size were investigated in \cite{B2014, DK2015, DKW2012, GKPPVW2017, GV2016}.

\vspace{2mm}

In \cite{A2004} Aigner introduced the Plurality problem as one of the possible generalizations of the Majority problem. Since then many research was carried out about the Plurality problem, see e.g. \cite{A2007, ADM2005, AR2008, AR2009, CGMY2007, DJKKS2007, GKPP2013, ST2008}. In \cite{CGMY2007} it is written, that \textit{"[The Plurality problem] seems to be a more difficult variant"}. It is supported by the fact that the Plurality problem with the pairing model is still unsolved. A lower bound of $3n/2+O(1)$ and an upper bound of $5n/3+O(1)$ were proved in \cite{ADM2005}, while the upper bound is conjectured to be sharp in \cite{A2007}.

\vspace{2mm}

In this article we consider a model of the Plurality problem where the query size is larger. Note that unlike in case of queries of size two, there are several different ways Adversary can answer, which creates several different models. In this article we will be interested in the model where Adversary partitions the query into color classes.

\vspace{2mm}

\textbf{Structure of the paper.} In Section 2 we introduce our model and state our results, in Section 3 we prove our result about query size $k$ (Theorem \ref{general}). In Subsection 4.1 we prove the upper bounds of our main result (Theorem \ref{m1a}), in Subsection 4.2 we prove the lower bound of our main result for $n$ even in full details (lower bound of Theorem \ref{m1a} \textbf{(i)}) and in Subsection 4.3 we sketch the lower bound of our main result for $n$ odd (lower bound of Theorem \ref{m1a} \textbf{(ii)}). We finish our article with some open questions and remarks in Section 5.

\section{Our results}

Let us introduce our model in a more precise form. In this article we are given $n$ indexed balls each colored with one of \textbf{three} given colors. Questioner asks queries of size $k$ in an adaptive manner  and Adversary's answer is a partition of the query set into color classes. Questioner's goal is to find a plurality ball (or state that there is no such ball) as soon as possible, however Adversary wants to postpone the solution as long as possible. We denote the maximum number of queries if both play optimally by $A_p(n,k)$.  

\vspace{2mm}

If Questioner's goal is to determine the color classes (up to permutation), then we denote by $A_c(n,k)$ the analogue quantity with the same constraints. (We note that this problem is the so-called \textbf{Partition problem}.)

\vspace{2mm}

It is easy to see that if Adversary queries all the possible $k$-sets, then he will be able to find the color classes. %First we provide general bounds on $A_p(n,k)$ and then deal with the triplet queries in the following. 

\subsection{General result - Query size of $k$}

We follow the lines of the proofs in \cite{ADM2005} to prove some general bounds on $A_p(n,k)$ and $A_c(n,k)$:

\begin{thm}\label{general} For $n$ even with $n \ge k \ge 2$ we have

$$\frac{n-2}{k-1}+\frac{n}{2(k-1)^2} \le A_p(n,k) \le A_c(n,k) \le \Big{\lceil} \frac{n-1}{k-1} \Big{\rceil}+\Big{\lceil} \frac{n-1}{(k-1)^2}\Big{\rceil}.$$ 

\vspace{2mm}

\noindent
For $n$ odd with $n \ge k \ge 2$ we have

$$ \frac{n-5}{k-1}+\frac{n-k}{2(k-1)^2}\le A_p(n,k) \le A_c(n,k) \le \Big{\lceil} \frac{n-1}{k-1} \Big{\rceil} + \Big{\lceil}\frac{n-1}{(k-1)^2}\Big{\rceil}.$$ 

\end{thm}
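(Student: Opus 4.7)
The plan is to establish the three inequalities appearing in each part of Theorem~\ref{general} separately. The middle inequality $A_p(n,k)\le A_c(n,k)$ is immediate: any strategy that determines the full color partition also decides whether a plurality ball exists and exhibits one when it does, so any Questioner strategy for the Partition problem automatically solves the Plurality problem.

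For the upper bound on $A_c(n,k)$ I would describe a two-phase Questioner strategy in the spirit of the one used for pairs ($k=2$) in~\cite{ADM2005}. In Phase~1 (covering), Questioner asks $\lceil (n-1)/(k-1)\rceil$ queries arranged as a chain: the first is an arbitrary $k$-set, and each subsequent query contains $k-1$ new balls together with exactly one ball that was included in some previous query. This touches all $n$ balls, and after each answer the balls lying in a common part of the returned partition are merged into a component known to be monochromatic, while balls in different parts record pairwise inequality constraints. In Phase~2 (resolution), Questioner repeatedly picks up to $k$ representatives from distinct still-unresolved components and queries them; because there are only three colors available, each such query either merges several components or generates strong new constraints. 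A counting argument, modelled on \cite{ADM2005}, then shows that $\lceil (n-1)/(k-1)^2\rceil$ additional queries suffice to pin down the entire color partition.

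The lower bound on $A_p(n,k)$ is proved by constructing an Adversary strategy. Following \cite{ADM2005}, the Adversary maintains at all times two colorings of the balls that agree with every answer given so far but disagree on the identity (or existence) of the plurality ball. For each incoming $k$-query she answers with a partition consistent with both maintained colorings, adjusting one of them slightly whenever necessary in order to preserve both that consistency and the disagreement about plurality. A potential function measuring the remaining ambiguity (roughly, the number of balls whose color can still be swapped without contradicting any past answer) decreases by a bounded amount with each query. Tracking the potential, a covering-type phase contributes a $(n-2)/(k-1)$ term and a resolution-type phase a further $n/(2(k-1)^2)$ term, matching the claimed bound for even $n$; the factor $\tfrac12$ in the second term appears because a single $k$-query can resolve only about half of the $(k-1)^2$ cross-component pairs it touches in the worst case.

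The main obstacle I anticipate is the odd-$n$ case. When $n$ is odd the Adversary has less symmetry to exploit: the two conflicting colorings cannot have exactly equal class sizes, so she must ``spend'' a few queries' worth of potential to keep the plurality undecided as the class sizes drift. This accounts for the weaker additive constants $(n-5)/(k-1)$ and $(n-k)/(2(k-1)^2)$ in the odd case. Sorting out which component structures can be maintained near the end of the game, and verifying that the Adversary's strategy remains well defined throughout, is where the bulk of the technical work will lie; small values of $n$ may also need a separate inductive base to avoid degenerate behaviour.
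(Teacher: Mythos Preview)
Your proposal is a plan rather than a proof, and in both directions it misses the specific mechanism that makes the argument go through.

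\textbf{Upper bound.} Your Phase~1 is a \emph{chain} (each query overlaps the previous one in a single ball), whereas the paper uses a \emph{star}: a fixed ball $x$ is included in every Phase~1 query together with $k-1$ fresh balls. The difference matters. With the star, after $\lceil (n-1)/(k-1)\rceil$ queries Questioner knows the entire color class of $x$; the remaining balls carry only \emph{two} colors, so from that point on the ``same/different'' information really is transitive and it suffices to make the auxiliary graph on the non-$x$-colored balls connected, costing $\lceil (n-1)/(k-1)^2\rceil$ further queries. With your chain, consecutive queries share one ball, so one part of each answered partition is matched to a part of the previous query, but the other parts need not line up: with three colors, recording ``pairwise inequality constraints'' is not enough (from $a\neq b$ and $b\neq c$ you cannot decide $a$ vs.\ $c$). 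You therefore have no clean reduction to a $2$-color connectivity problem, and the claim that $\lceil (n-1)/(k-1)^2\rceil$ additional queries suffice is unsupported.

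\textbf{Lower bound.} Your description (``maintain two colorings that disagree on plurality'', ``potential function measuring remaining ambiguity'') does not contain the idea that actually yields the stated constants. The paper's Adversary keeps a \emph{single} target coloring with class sizes $n/2,n/2,0$ (so the correct output is ``no plurality'') and builds an auxiliary graph: a \emph{blue} edge, weight $1/(k-1)$, joins two balls answered to have the same color, and a \emph{red} edge, weight $1/(k-1)^2$, joins two balls answered to have different colors. The answering rule guarantees that each query contributes total weight at most~$1$. One then shows that when Questioner can finish, $G$ must contain at least $n-2$ blue edges (otherwise a blue component can be recolored to create a plurality) and at least $n/2$ red edges (every ball first placed into the side that fills up first acquires a red edge). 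Summing the weights gives $(n-2)/(k-1)+n/(2(k-1)^2)$. There is no two-phase structure in the lower bound, and the $1/2$ does not come from ``resolving half of the $(k-1)^2$ cross-component pairs'' but from the fact that only one side of the bipartition, of size $n/2$, is guaranteed to have all its vertices incident to red edges. For odd $n$ the Adversary designates one ball as the unique ball of color~$3$ once every ball has been touched, introduces \emph{green} edges to it, and a separate lemma shows at least $n-5$ blue/green edges and at least $(n-k)/2$ red edges survive; this is where the weaker constants come from, not from ``spending potential''. None of this machinery is present in your sketch.
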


\vspace{2mm}

\begin{rem}
We remark that for $k=2$ the lower bound is up to additive constant the same as the one in \cite{ADM2005}. 
\end{rem}

\subsection{Query size of three}

In case of $k=2$ there is a linear gap between the best known lower and upper bound on $A_p(n,2)$ \cite{ADM2005} as we mentioned in the introduction. In Theorem \ref{general} we also have linear gaps between the lower and upper bound on $A_p(n,k)$ for larger $k$.

Surprisingly if the query size is 3, then we can prove an (almost) precise result for the Plurality and the Partition problem in this model in case of $n=4k$ or $n=4k+2$. That means that in the former case we can exactly determine $A_p(n,3)$, while we give two possible values in the latter case. If $n$ is odd we can give the asymptotics of $A_p(n,3)$.

\begin{thm}\label{m1a}

\textbf{(i)} For $n \ge 4$ even we have
$$\frac{3}{4}n-2 \le A_p(n,3)\le A_c(n,3) \le \frac{3}{4}n-\frac{1}{2}.$$
\textbf{(ii)} For $n \ge 3$ odd we have
$$\frac{3}{4}n-\frac12\log_2(n)-5 \le A_p(n,3)\le A_c(n,3) \le \frac{3}{4}n-\frac{1}{2}.$$

\end{thm}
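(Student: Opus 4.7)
The theorem bundles together an upper bound for the partition problem (which gives the upper bound for plurality via the obvious inequality $A_p(n,3)\le A_c(n,3)$), and two lower bounds for the plurality problem (even and odd $n$). I would prove each separately.

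\textbf{Upper bound $A_c(n,3)\le \tfrac34 n - \tfrac12$.} I would design an explicit strategy for Questioner. Since each query of three balls has only three possible partition shapes (one block, a $2{+}1$ split, or three singletons), and since there are only three colors available, a single ``three singletons'' answer essentially calibrates a reference triple $\{a_1,a_2,a_3\}$ of known colors. The strategy is to work in two regimes: while Questioner has no reference triple, he keeps querying disjoint triples of untested balls, hoping for a $1{+}1{+}1$ answer (which processes $3$ balls in one query at rate $1/3$); the only bad case is a long streak of ``all same'' or $2{+}1$ answers, but in those cases Questioner can merge triples and re-query to make progress. Once a reference triple exists, Questioner asks $\{a_i,a_j,x\}$ to resolve $x$ in one query. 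An amortized accounting assigning a cost $\tfrac34$ to every ball should give the bound; the $-\tfrac12$ correction comes from the savings on the first reference triple. The main obstacle is handling the tail carefully to match the constant $-\tfrac12$ rather than a larger additive loss.

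\textbf{Lower bound for even $n$, $A_p(n,3)\ge \tfrac34 n - 2$.} The plan is to construct an Adversary strategy that, against any Questioner, forces at least $\tfrac34 n-2$ queries. Adversary maintains a virtual coloring and a set $U$ of balls whose color he has not yet committed to (initially $U=[n]$), together with an auxiliary structure of ``tentative pairs'' of balls declared to have a common but as-yet unspecified color. On a query $\{a,b,c\}$ Adversary chooses the $2{+}1$ partition whenever possible, splitting off the ball that destroys the least potential: this way each query removes at most $4/3$ units of ambiguity on average. A potential function of the form $\Phi = \alpha|U|+\beta\cdot(\text{number of tentative pairs})$ with a suitable choice of $\alpha,\beta$ should drop by at most $1$ per query, while the game can only end once $\Phi$ is small enough that a plurality ball is forced. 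Since $n$ is even, Adversary can always maintain two color classes of equal size that are candidates for non-plurality, which is what allows the clean bound $\tfrac34 n-2$.

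\textbf{Lower bound for odd $n$, $A_p(n,3)\ge \tfrac34 n - \tfrac12\log_2 n - 5$.} The same framework should work, but the parity of $n$ breaks the symmetric ``two equal candidate classes'' trick: one of the three classes must be odd, which gives Questioner a small informational advantage that Adversary cannot fully neutralize. I would let Adversary play the even strategy on all but a logarithmic-sized ``reserve'' of balls, using the reserve to absorb the parity obstruction. Each time Questioner threatens to pin down a plurality ball, Adversary trades roughly one reserve ball for delaying the decision; since the reserve has to cover a doubling-style process (Questioner can halve the ambiguity in the reserve each query), its size needs to be about $\tfrac12\log_2 n$, which is exactly the shape of the correction term.

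The step I expect to be hardest is the even lower bound, specifically proving that the potential function $\Phi$ truly drops by at most $1$ regardless of which triple Questioner asks; a case split is needed according to how many of $a,b,c$ lie in $U$ versus tentative pairs versus already committed balls. Once that is cleanly in place, the odd case should follow by grafting on the logarithmic reserve argument sketched above.
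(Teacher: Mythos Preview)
Your upper-bound strategy does not achieve rate $\tfrac34$ as written. Once you have a reference triple $\{a_1,a_2,a_3\}$ of three distinct colors, the query $\{a_i,a_j,x\}$ resolves exactly one new ball $x$, so you pay one query per remaining ball---rate $1$, not $\tfrac34$. Worse, the Adversary never has to give you a $1{+}1{+}1$ answer on a fresh triple (he can always answer $2{+}1$), so the ``hoping for a reference triple'' phase has no guaranteed exit; your ``merge and re-query'' fallback is not specified. The paper's upper bound works quite differently: fix a \emph{single} ball $x$ and ask $\{x,y,z\}$ for disjoint pairs $\{y,z\}$. Each such query processes two new balls at once (you learn which of $y,z$ share $x$'s color, and for those that don't, whether they agree with each other), costing $\lceil(n-1)/2\rceil$ queries. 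Then among the balls not of $x$'s color only two colors remain, and their same/different graph has at most $\lfloor(n-1)/2\rfloor$ components; connecting it costs at most $\lceil(\lfloor(n-1)/2\rfloor-1)/2\rceil$ further queries. The $\tfrac34$ rate comes from this two-phase structure, not from a rainbow reference triple.

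Your lower-bound sketches are in the right spirit but miss the mechanism that actually makes the argument go through. The paper does not use a potential that \emph{decreases}; it assigns weight $\tfrac12$ to ``same-color'' (blue) edges and $\tfrac14$ to ``different-color'' (red) edges in an auxiliary graph, so every $2{+}1$ answer adds exactly weight $1$. The work is then to show that at termination there must be at least $n-2$ blue edges (easy: otherwise one color class splits and a plurality exists) and at least $n-4$ red edges (hard: this requires the Adversary to maintain, as long as possible, a balanced two-coloring together with an imbalance invariant on components, and a lengthy case analysis on the first moment these conditions fail). Your $\Phi=\alpha|U|+\beta\cdot(\text{pairs})$ has no term tracking the red/different-color information, which is precisely where the extra $\tfrac14 n$ comes from; without it you will not get past $\tfrac12 n$. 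For odd $n$, the $\log_2 n$ loss in the paper does not come from a reserve of balls but from bounding the red-degree of a distinguished ``potential third-color'' vertex inside a deficient component; the logarithm arises because that degree at most doubles each time three deficient components merge.
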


\section{Proof of Theorem \ref{general}}

To prove the upper bound we provide the following algorithm for Questioner (one can see a more detailed version for $k=3$ in the proof of Theorem \ref{m1a}).

Questioner picks a ball $x$ and partitions the remaining elements into $\lceil(n-1)/(k-1)\rceil$ sets of size $k-1$ or smaller. Then he adds arbitrary balls from $[n] \setminus \{x\}$ to the smaller parts to obtain sets of size $k-1$ and asks these sets together with $x$. This way he finds out which balls have the same color as $x$. Among the remaining balls there are only two colors. It is easy to see that if he can make the query family connected (meaning that there is no partition of the remaining balls into two parts such that each (following) query is contained in one of the parts), then he can find out the color classes. There are at most $\lceil(n-1)/(k-1)\rceil$ components, thus $$\Big{\lceil} \frac{\lceil(n-1)/(k-1)\rceil}{k-1} \Big{\rceil} = \Big{\lceil}\frac{n-1}{(k-1)^2}\Big{\rceil}$$ further queries are enough to make the query family connected.

\vspace{3mm}

For the lower bound we generalize the proof of the case $k=2$ by Aigner, De Marco and Montangero (\cite{ADM2005}, Theorem 3.2).

First let us assume that $n$ is even. Adversary maintains the following two conditions during his strategy:

\vspace{2mm}

\textbf{Condition 1:} he always answers in such a way that there is a $3$-coloring of the balls consistent with all the answers (previous answers and the ones just given) such that there are $n/2$ balls of color $1$ and $n/2$ balls of color $2$ and there is no ball of color $3$.  

\vspace{1mm}

\textbf{Condition 2:} he also answers in such a way that not all the $k$ balls of the query are of the same color. He neglects this condition if there are only answers that support Condition 1, but not Condition 2.

\vspace{1mm}

Note that by Condition 1, when the algorithm is finished, the solution for the Plurality problem is that there is no plurality ball.

\vspace{2mm}

Adversary builds a graph $G$ during his algorithm (at a point the graph depends on the queries asked and answers given till that point, however we do not denote this as it is always understandable from the context). The vertices of $G$ are the balls. At the beginning the graph does not contain edges. At the end it has two (consecutively built) sets $A$ and $B$ that both contain $n/2$ vertices, and it is consistent with the answers that $A$ consists of balls of color $1$ and $B$ consists of balls of color $2$. 

At the first query Adversary partitions that query into two non-empty parts $A$ and $B$ and puts a blue spanning tree into both parts, and a red spanning tree between the two parts (balls in $A$ are considered of color $1$ and in $B$ of color $2$). When a query $Q$ is asked at a certain point, Adversary considers the balls of $Q$ that were not asked so far and moves them into $A$ or $B$ such a way that both $A$ and $B$ have sizes at most $n/2$ (including the newly added balls). Let $A'$ be the set of balls of $Q$ in $A$ and $B'$ be the set of balls of $Q$ in $B$. Then Adversary answers that the balls in $A'$ are of one of the colors, the balls in $B'$ are of another color, and there are no balls of the third color in $Q$. He also puts a spanning tree of blue edges into $A'$ and another one into $B'$ (this represents that those vertices surely have the same color). Additionally, in case $Q$ contains vertices from both $A$ and $B$ (after Adversary moves the new vertices there), he also puts a red edge from any ball that was first asked in $Q$ to a ball on the other side (or equivalently a ball of other color). He does it such a way that at most $k-1$ red edges are added. (Note that red edges represent the fact that their endpoints are of different color.) In all cases if the edges he would add are already present, he simply does not add them.

This way either at most $k-1$ blue, or at most $k-2$ blue and at most $k-1$ red edges are added to $G$ at any step. Let the weight of blue edges be $1/(k-1)$ and the weight of red edges $1/(k-1)^2$, this way a query adds at most 1 to the total weight. We will show that the total weight of $G$ is at least $$\frac{n-2}{k-1}+\frac{n}{2(k-1)^2}$$ at the end of the algorithm.

\vspace{2mm}

When the algorithm is finished, there are at least $n-2$ blue edges. Indeed, if $A$ is disconnected, it is possible that one of its components is of color $3$, the other ones are of color $1$, while $B$ consists of color $2$, thus there is a plurality color, but it is also possible that $A$ and $B$ are both monochromatic, thus there is no plurality. 

Also there are at least $n/2$ red edges. Indeed, any ball that is asked first in a query $Q$ is incident to a red edge, except those when the answer was that $Q$ is monochromatic, and its vertices are all in the same part, say $A$. Consider the first such answer where the query $Q$ also contains a new ball. That ball could have been put into $B$ instead, unless $B$ already contained $n/2$ vertices. In that case $B$ reached $n/2$ vertices first, and whenever a new vertex was added to $B$, it was incident to a red edge. Since red edges do not go between vertices of $B$, this means there are at least $n/2$ red edges. The total weight of the at least $n-2$ blue edges and at least $n/2$ red edges is at least $(n-2)/(k-1)+n/2(k-1)^2$, as needed and we are done with the lower bound in case of $n$ even.

\vspace{3mm}

Let us assume now $n$ is odd. Adversary's strategy is similar to the previous one, however in this case we have to solve some technical details that emerge because of the fact that there is no exactly balanced two-coloring of odd many balls. He follows the next two conditions during his strategy:

\vspace{1mm}

\textbf{Condition 1:} he always answers such that there is a consistent $3$-coloring of the balls that contains $(n-1)/2$ balls of color $1$, $(n-1)/2$ balls of color $2$ and one ball of color $3$. 

\vspace{1mm}

\textbf{Condition 2:} he also answers in such a way that not all the $k$ balls of the query are of the same color. He neglects this condition if there are only answers that support Condition 1, but not Condition 2.
%he answers such that there are balls of two different colors among the $k$ balls in the query, unless it would violate the previous condition. 

\vspace{1mm}

As long as there is a ball that has not appeared in any queries, he answers the same way as in the even case, and blue and red edges are added similarly to the auxiliary graph $G$ (and we count them with the same weights at the end as in the even case).

\vspace{1mm}

When the last ball $x$ is queried by a query $Q$ (if there are more 'last balls' in $Q$ he just picks one of them), Adversary thinks of it as it is the only ball of color $3$ and partition the remaining new balls in such a way that there is $(n-1)/2$ balls of color $1$ (that we call $A$), $(n-1)/2$ balls of color $2$ (that we call $B$) and one ball of color $3$ (it is possible since he did not violate the condition $1$). From that point, as he already decided the colors of the balls, his answers are determined. 

\vspace{1mm}

Now we describe how the definition of the auxiliary graph $G$ changes after $Q$ (including also $Q$). If a query $R$ contains $x$, Adversary puts a spanning tree of blue edges into $A \cap R $ and $B \cap R$ like in the even case, and adds a green edge from a vertex of $A \cap R$ (if such ball exists) to $x$ and another green edge from a vertex of $B \cap R$ (if such ball exists) to $x$. For queries not containing $x$ he does not change the strategy.

\vspace{1mm}

\begin{lemma}\label{oddgreenblue}

If the Questioner can solve the Plurality problem, there are altogether at least $n-5$ blue and green edges. 

\end{lemma}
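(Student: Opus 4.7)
The plan is to bound the number of blue and green edges via a connected-components argument on the subgraph $H \subseteq G$ consisting of exactly these edges. Because blue edges lie only inside $A$ or inside $B$ while green edges are incident only to $x$, every connected component of $H$ is of one of three types: a blue component of $A$ with no incident green edge, the analogous object in $B$, or the single component containing $x$ (which absorbs every blue component joined to $x$ by a green edge). Writing $\alpha$ and $\beta$ for the numbers of the first two types, the total number of components of $H$ equals $\alpha + \beta + 1$, and consequently the number of blue and green edges is at least $n - (\alpha+\beta+1)$. So it suffices to prove that $\alpha + \beta \le 4$ whenever Questioner can correctly terminate.

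For that bound I would argue by contradiction, assuming $\alpha \ge 3$ (the case $\beta \ge 3$ being symmetric). Picking three free blue components $C_1, C_2, C_3$ of $A$, one observes that their sizes cannot all equal the critical value $(n-3)/2$, simply because $|A| = (n-1)/2$ is too small: $3 \cdot (n-3)/2 > (n-1)/2$ already for $n \ge 5$. So some $C_i$ has size $c_i \ne (n-3)/2$, and I would then exhibit an alternative color assignment by recoloring that $C_i$ from color $1$ to color $3$ and leaving everything else unchanged. The key point is that this new assignment is consistent with every recorded answer: blue edges inside $C_i$ stay monochromatic, red edges from $C_i$ to $B$ still connect $3$ and $2$, and green edges are untouched because $C_i$ has no green edge to $x$. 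Meanwhile the new color-class sizes $\bigl((n-1)/2-c_i,\,(n-1)/2,\,1+c_i\bigr)$ display a strict plurality: color $2$ wins if $c_i < (n-3)/2$ and color $3$ wins if $c_i > (n-3)/2$. This contradicts Questioner's ability to terminate with the correct answer ``no plurality'', forcing $\alpha \le 2$ and, symmetrically, $\beta \le 2$.

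The main obstacle is the verification that the recolored assignment really respects every constraint ever recorded in $G$; this would be fussy were it not for the interaction between blue, red and green edges, but the rigid bipartite shape of the auxiliary graph (blue only within $A$ or within $B$, red only between $A$ and $B$, and green only at $x$) makes all three checks immediate. A secondary issue is the corner case of very small $n$, where the critical value $(n-3)/2$ degenerates to $0$ or $1$ and the size comparisons above have to be read carefully; since $|A|=|B|=(n-1)/2$ forces $\alpha$ and $\beta$ to be tiny in this regime, these cases impose only bounded slack, which is already absorbed by the $-5$ on the right-hand side of the bound.
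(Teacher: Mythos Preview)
Your argument is correct and rests on the same mechanism as the paper's: counting blue--green components and exhibiting an alternative consistent $3$-coloring with a plurality whenever there are too many. The paper works on $A\cup\{x\}$ and $B\cup\{x\}$ separately; for each side it fixes a split $A=A_1\cup A_2$ with $A_2$ a union of free components, uses one recoloring ($A_1,\,B,\,A_2\cup\{x\}$) to force $|A_1|=1$, and then a second recoloring ($A_1\cup A_3,\,B,\,A_4\cup\{x\}$) to force $A_2$ to be connected, reading off $\ge(n-5)/2$ edges on each side. Your route is more direct: you bound the number of free components by~$2$ on each side via a \emph{single} recoloring of one suitably sized component $C_i$, and then read off $\ge n-5$ edges globally from the component count. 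Both reach exactly the same bound; your packaging avoids the intermediate structural claim at the price of a small redundancy (the case $c_i>(n-3)/2$ is in fact vacuous once $\alpha\ge 3$, since it would force $C_i=A$). Your consistency check is sound because, as you note, blue edges stay inside $A$ or inside $B$ and green edges are incident only to $x$, so a query touching $C_i$ can never simultaneously touch $x$ or $A\setminus C_i$ without producing a forbidden edge.
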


\begin{proof}

Since $A$, $B$, $\{x\}$ is a coloring consistent with Adversary's answers, Questioner must answer that there are no plurality ball.

If the graph on $A\cup\{x\}$ with the blue and green edges is connected, then there are $\frac{n+1}{2}-1$ edges in it. Assume that it is not connected. Let $A=A_1\cup A_2$, $A_1\cap A_2=\emptyset$, $A_2\neq\emptyset$, such that there are no edges between $A_1\cup\{x\}$ and $A_2$. In this case, $A_1$, $B$, $A_2\cup\{x\}$ is also a coloring consistent with Adversary's answers. If Questioner can solve the Plurality problem, then there should not be any plurality balls.

That means $\max(|A_1|,|A_2\cup\{x\}|)=|B|=\frac{n-1}{2}$. If $|A_1|=|B|$, then $A_2=\emptyset$, which is a contradiction, thus $|A_2\cup\{x\}|=|B|$, so $|A_1|=1$.

Assume that $A_2$ is not connected. Let $A_2=A_3\cup A_4$, $A_3\cap A_4=\emptyset$, $A_3\neq\emptyset$, $A_4\neq\emptyset$, such that there are no edges between $A_3$ and $A_4$. Then $A_1\cup A_3$, $B$, $A_4\cup\{x\}$ is a consistent three-coloring with plurality balls (in $B$). That is a contradiction, so $A_2$ must be connected. Thus, there are at least $\frac{n-3}{2}-1$ edges in $A_2$.

Hence there are at least $\frac{n-5}{2}$ edges in $A\cup\{x\}$. And similarly, there are at least $\frac{n-5}{2}$ edges in $B\cup\{x\}$. We did not count any edges twice, so altogether there are at least $n-5$ blue and green edges.

\end{proof}

%We claim that when the algorithm is finished there are at least $n-1$ blue or green edges. Indeed, the subgraph consisting of blue and green edges must be connected, otherwise there would be a component $C$ of it not connected to $x$. We can assume without loss of generality that it is a subset of $A$. Then it is possible that the color classes are $C\cup \{x\}$, $A\setminus C$ and $B$, thus there is a plurality color, but it is also possible that there is no plurality color (by Condition 1). ((ugyanugy mint k=3-nal ez nem teljesen igaz))

We also claim that there are at least $(n-k)/2$ red edges. In fact we prove that there are at least $(n-k)/2$ red edges before querying the last ball $x$. Let us assume first that before choosing $x$, there was a query $R$ where the answer was that it is monochromatic. In that case there are still at least $n/2$ red edges in $G$ at that point by the same argument that we used in the even case. Let us consider now the case that there was no monochromatic answer before choosing $x$. Then at that point at least $n-k$ balls that appeared in the earlier queries and all of them have an incident red edge also by the same argument we used in the even case.

\vspace{2mm}

The weight of the red and blue are the same as in the even case and let the weight of a green edge be $1/(k-1)$. The weight of any query will be at most 1. By the previous argument the total weight in $G$ at the end of the algorithm is at least 

$$\frac{n-5}{k-1}+\frac{n-k}{2(k-1)^2},$$

\noindent
that proves the statement in the case of odd $n$.

\section{Proof of Theorem \ref{m1a}}

\subsection{Proof of the upper bound of Theorem \ref{m1a} \textbf{(i),   (ii)}}

%Felső

We provide a more precise version of the proof that we provided in the proof of the upper bound of Theorem \ref{general}. To prove the upper bound we divide Questioner's algorithm into two phases:

\vspace{3mm}

\textbf{Phase 1}: Questioner chooses an arbitrary ball $x \in [n]$, and partitions all the other balls into pairs. (If $n$ is even, there is one extra ball $x' \in [n]$, that is not an element of a pair.) He asks all the queries that contain a pair and $x$. If $n$ is even, he also asks the query $\{x, x', y \}$, where $y \in [n] \setminus \{x,x'\}$ is an arbitrary ball. So far he asked $\left\lceil\frac{n-1}{2}\right\rceil$ queries.

After these queries he makes an auxiliary graph $G$, whose vertex set is $[n]$ minus those balls that turned out to have the same color as $x$ (including $x$) during Phase 1. If a query was $\{x, z, z'\}$, and the answer was that $x$ has different color than $z$ and $z'$, then he adds the edge $(z,z')$ to $G$. He colors it red if $z$ and $z'$ have different colors, and blue if $z$ and $z'$ have the same color. Let $C_i \subset [n]$ be the set of balls with color $i$ (for $i=1, 2, 3$). We can assume that $x$ has color $1$. So we have that $V(G)=C_2\cup C_3=[n]\setminus C_1$.

Note that $G$ has at most $$n-1-\left\lceil\frac{n-1}{2}\right\rceil=\left\lfloor\frac{n-1}{2}\right\rfloor$$ components (as by every query, the number of components of $G$ decreased by at least one).

\vspace{2mm}

Questioner's next goal is to make $G$ connected with some more queries. If it is so, then he is able to separate the remaining two color classes.

\vspace{4mm}

\textbf{Phase 2:} first he asks a query that contains balls from three different components of $G$. If he asks $\{x_1, x_2, x_3\}$, then he adds the edges $(x_1,x_2), \ (x_1,x_3), \ (x_2,x_3)$ to $G$. He colors the edge $(x_i, x_j)$ blue if $x_i$ and $x_j$ have the same color, and red if they have different colors. After the answer, the three components become one. Then in the new graph we continue this procedure till we can.

If at one step there are only two components, then it must be the last query and he asks one ball from the first one, and two from the second one.

(If there are at most $2$ balls in $G$, then he asks at most $1$ queries that contains all the balls in $G$ and some balls with color $1$.)

\vspace{2mm}

Note that we had at most $\left\lfloor\frac{n-1}{2}\right\rfloor$ components of the beginning of Phase 2, and their number decreases by two for all but the last of the queries, thus he asked at most $$\left\lceil\frac{\left\lfloor\frac{n-1}{2}\right\rfloor-1}{2}\right\rceil$$ queries during Phase 2.

\vspace{2mm}

Now the auxiliary graph is connected. Let $y$ be a vertex of it, and say it has color $2$. For every vertex $z$, there is a path from $y$ to $z$. Then the parity of the number of the red edges in this (and every) path between $y$ and $z$ shows us the color of $z$. So we are able to determine the set $C_2$, $C_3$.

Altogether he asked at most $$\left\lceil\frac{n-1}{2}\right\rceil+\left\lceil\frac{\left\lfloor\frac{n-1}{2}\right\rfloor-1}{2}\right\rceil$$ queries. One can easily calculate that if $n\equiv 0, 1, 2, 3\pmod 4$, then it is equal to $$\frac{3}{4}n-1, \ \frac{3}{4}(n-1), \ \frac{3}{4}(n-2)+1, \ \frac{3}{4}(n-3)+1,$$ respectively. As $$\frac{3}{4}(n-2)+1=\frac{3}{4}n-\frac{1}{2}$$ is the largest of these numbers, we are done with the upper bound.

\qed

\subsection{Proof of the lower bound in Theorem \ref{m1a} \textbf{(i)}}

%Alsó

Now, we prove the lower bound. We give a strategy for Adversary which forces any algorithm that can solve the Plurality problem to ask at least $$\frac{3}{4}n-2$$ questions for every even $n$. 

Let $\mathcal{S}$ denote the set of the balanced two-colorings of the $n$ \textbf{balls}. The most important point in Adversary's strategy is that he takes care that there is always a coloring in $\mathcal{S}$ that is consistent with the previous answers. He will never violate this.

%for any query he answers that two of the three balls have the same color, and the third one has a different color, \textbf{if} there is a coloring in $\mathcal{S}$ that is consistent with this answer and the previous ones. In any other cases, Adversary answers that the three balls have the same color. 

%In the first case, the adversary cannot choose any of the ball to be the one with the different color. We will specify it later. (majd lesz valah\'anyas \'all\'it\'as...???)

% Segédgráfok

He makes an auxiliary graph $G$ to code the information that one can gain from his answers in the following way: 

\vspace{2mm}

$\bullet$ The vertices $G$ are the balls. 

\vspace{1mm}

$\bullet$ At the beginning, there are no edges in the graph. If he gets the answer for a query $\{i,j,k\}$ that balls $i$ and $j$ have the same color, and $k$ has a different color, than he draws the edges $ik$ and $jk$ with red, and $ij$ with blue. If he gets the answer that balls $i$, $j$ and $k$ all have the same color, than he draws two of the three edges $ij$, $ik$ and $jk$ with blue.

\vspace{1mm}

$\bullet$ At each point $G$ contains all the edges that comes from the answers received so far. (We note that it would be more precise to denote the current state of $G$, that is what queries were asked so far, but we omit this since it won't be misleading at any point of the proof.) 

\vspace{2mm}

Later, we will think of these red and blue colors as weights. Red edges will have weight $\frac14$, blue edges will have weight $\frac12$, so  the total weight of every answer is 1.

\vspace{2mm}

We denote by $G_R$ the graph spanned by the red edges of $G$, and by $G_B$ the graph spanned by the blue edges of $G$. By the strategy, and the definition of the edges, $G_R$ is a bipartite graph, and $G_B$ has at least two components. Note also that each ball in a component of $G_B$ has the same color.

% Kell n-2 kék

\begin{prop}\label{endblue}
If Questioner can solve the Plurality problem at certain point, then $G$ has at least $n-2$ blue edges at that point.
\end{prop}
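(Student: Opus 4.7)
The plan is to argue by contradiction, exploiting the consistent balanced two-coloring that Adversary maintains in $\mathcal{S}$. Since $\mathcal{S}$ is nonempty, there is a partition $(A,B)$ of $[n]$ with $|A|=|B|=n/2$ (where $A$ is color $1$ and $B$ is color $2$) consistent with every answer given so far, so Questioner must announce that there is no plurality ball. I will show that if $G_B$ has fewer than $n-2$ blue edges, then one can exhibit another three-coloring consistent with the same answers which \emph{does} admit a plurality ball, contradicting that Questioner has solved the problem.

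The first step is the simple structural observation that, in the coloring $(A,B)$, every blue edge must join two same-colored balls and every red edge must join two differently-colored balls. Hence every blue edge lies inside $A$ or inside $B$ and every red edge runs between $A$ and $B$; in particular each connected component of $G_B$ is contained entirely in $A$ or entirely in $B$.

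The heart of the proof is to show that $A$ is itself a single component of $G_B$, and symmetrically so is $B$. Suppose, for contradiction, that $A$ splits as $A=A_1\sqcup A_2$ with $A_1,A_2\neq\emptyset$ and no blue edge between them. Recolor the balls of $A_1$ with color $3$, keeping $A_2$ as color $1$ and $B$ as color $2$. Blue edges incident to $A_1$ remain inside $A_1$ (since $A_1$ is a union of $G_B$-components), so their endpoints are still monochromatic. Red edges incident to $A_1$ run to $B$ by the structural observation, so they still join endpoints of different colors ($3$ versus $2$). Thus the recoloring is consistent with every earlier answer. But color $2$ now has $n/2$ balls while colors $1$ and $3$ have $|A_2|<n/2$ and $|A_1|<n/2$ balls respectively, so color $2$ is strictly a plurality — a contradiction. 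The same argument applied to $B$ forces $B$ to be a single $G_B$-component.

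Once $A$ and $B$ are each connected in $G_B$, each contributes at least $n/2-1$ blue edges, and since no blue edge crosses between $A$ and $B$ these two edge sets are disjoint, yielding at least $n-2$ blue edges in total. The only delicate point, and the one I expect to be the main obstacle to state cleanly, is the recoloring step: one must explicitly invoke the consistency of $(A,B)$ to rule out red edges lying within $A$ (which would be violated when $A_1$ is made monochromatic of color $3$), and one must check that all three color classes in the new coloring have size strictly less than $n/2$ except for $B$.
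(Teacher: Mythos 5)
Your proposal is correct and follows essentially the same route as the paper: both fix a consistent balanced two-coloring, observe that each $G_B$-component lies in one color class, and derive a contradiction by splitting one class into two nonempty pieces with no blue edge between them to produce a consistent three-coloring in which the other class is a plurality. Your write-up is slightly more explicit about verifying that the recoloring remains consistent with the answers (via the placement of red edges), but the argument is the same.
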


\begin{proof}
By the strategy of Adversary there is a coloring from $\mathcal{S}$ that is consistent with the answers, so the only solution for the Plurality problem can be that there is no plurality ball.

Again, we know that there is at least one coloring (of the balls) from $\mathcal{S}$ that is consistent with $G$. Let $X$ and $Y$ the two color classes of that coloring. Assume that $G_B$ has more than two components. Note that each component of $G_B$ is a subset of either $X$ or $Y$. $G_B$ has more than two components, so without loss of generality we can assume that there are no blue edges between $X_1$ and $X_2$ (where $X=X_1\cup X_2$, $X_1\cap X_2=\emptyset$, $X_1\neq\emptyset$, $X_2\neq\emptyset$). In this case $X_1$, $X_2$ and $Y$ can be the three color class, and any ball from $Y$ is a plurality ball.

This means that if Questioner can solve the Plurality problem against Adversary's strategy, then $G_B$ has only two components at the end. Thus there are at least $n-2$ blue edges.

\end{proof}

\vspace{2mm}

Our next goal is to show that there are at least $n-4$ red edges.

\vspace{1mm}

If we can prove that, we can consider the weights of these red and blue colors. As we mentioned, red edges have weight $\frac14$, blue edges have weight $\frac12$, so every answer has a total weight of 1. Thus, the total number of queries is at least the sum of the weights of the edges, so at least
$$\frac{1}{2}(n-2)+\frac{1}{4}(n-4)=\frac{3}{4}n-2,$$
and that will prove Theorem \ref{m1a}.

% Kiegyensúlyozott öf halmazok

\vspace{2mm}
Now we start to work towards this goal. First let us introduce the following notation.
Consider a component $D$ of $G_R$. It is a bipartite graph, call the two classes of vertices $D_X$ and $D_Y$. Let $$d(D):=\big{|} |D_X|-|D_Y| \big{|}.$$ Then we say $d(D)$ is the \textit{imbalance} of component $D$.

\vspace{4mm}

\textbf{Strategy of Adversary:}

\vspace{2mm}

Now we will specify how Adversary should answer for a certain query. First we give 3 conditions (we mentioned the first one earlier) that will be the main lines of the strategy of Adversary. Then we specify the strategy more. 

\vspace{2mm}

\textbf{Condition 1}: as we mentioned, there must be at least one balanced two-colorings consistent with the answers. Adversary will never violate this condition. 

\textbf{Condition 2}: Adversary answers that two of the three balls have the same color, and the third one has a different. 
(He answers according to Condition 2, if he does not violate Condition 1 with the answer.)

\textbf{Condition 3}: Adversary makes components with small imbalance, as described below. 
(He does this if he can so that he does not violate Condition 1 with his answer.)

\vspace{3mm}

We will consider the first time, when Adversary has to violate Condition 2 or 3. It is not necessarily the end of the algorithm. After that violation happened, Adversary can violate again Condition 2 and 3 with any answer, but not Condition 1. (So one can think that the algorithm consists of two phases. During Phase 1, Adversary does not violate Condition $1,2$ and $3$ and during Phase 2, he does not violate Condition $1$.)

% Pontos stratégia

\vspace{2mm}

Now we give Adversary's strategy in more details (in the description of the strategy $G$ means the auxiliary graph before a specific query):

\vspace{2mm}

$\bullet_1$ If the query contains three balls from the same component of $G$, then the answer is determined by the two-coloring of that component of $G$.

\vspace{2mm}

$\bullet_2$ If the query $\{x_1,x_2,x_3\}$ contains three balls from exactly two components of $G$: $A$ and $B$, then two of the three balls are in the same component and we can assume that  $x_1, x_2\in A$ and $x_3\in B$. There are two cases:

\vspace{2mm}

\hspace{4mm}$\bullet_{2.1}$ First assume that $x_1$ and $x_2$ are in the same color class of $A$. In this case, Adversary answers that $x_1$ and $x_2$ has the same color, and $x_3$ has a different color (if that answer does not violate Condition 1. If it does, he has to answer that the three balls have the same color).

\vspace{2mm}

\hspace{4mm}$\bullet_{2.2}$ Now assume that $x_1$ and $x_2$ are in different color classes of $A$. Thus the answer will be that $x_1$ and $x_2$ have different colors, and $x_3$ has a color such that this answer does not violate Condition 1. Note that before this query by our assumption, there is at least one such balanced two-coloring. This will be the answer (so Adversary in this case can answer the query without violating Condition 1 or 2).

Note that if $d(A)=a$, and $d(B)=b$, then the imbalance of the new component in any cases of $\bullet_2$ will be either $a+b$ or $a-b$ or $b-a$.

% +-k, +-l, +-m

\vspace{3mm}

$\bullet_3$ If the query contains three balls from three different components: $A$, $B$, $C$, let $d(A)=a$, $d(B)=b$, $d(C)=c$. Assume that $a\ge b\ge c>0$. Adversary answers in a way that the imbalance of the new component is not $a+b+c$. (This is the exact meaning of Condition 3, see Figure 1.) In other words, the larger classes of $A$, $B$ and $C$ should not get the same color. If any of $a, b, c$ is 0, then Condition 3 does not give any restriction for Adversary.

If we can answer in such a way, the imbalance of the new component is at most $a+b-c$.

\begin{tikzpicture}[line cap=round,line join=round,>=triangle 45,x=1.0cm,y=1.0cm, scale=0.4]
\clip(0.,-1.5) rectangle (36.,7.);
\draw [rotate around={0.:(4.,5.)},line width=0.8pt,color=qqqqff] (4.,5.) ellipse (2.061193392310586cm and 0.498515998243498cm);
\draw [rotate around={0.:(4.,2.)},line width=0.8pt,color=qqqqff] (4.,2.) ellipse (1.581138830084193cm and 0.5cm);
\draw [rotate around={0.:(9.,5.)},line width=0.8pt,color=qqqqff] (9.,5.) ellipse (2.0611933923105585cm and 0.4985159982434913cm);
\draw [rotate around={0.:(9.,2.)},line width=0.8pt,color=qqqqff] (9.,2.) ellipse (1.5811388300841978cm and 0.5cm);
\draw [rotate around={0.:(14.,5.)},line width=0.8pt,color=qqqqff] (14.,5.) ellipse (2.0611933923105723cm and 0.49851599824349463cm);
\draw [rotate around={0.:(14.,2.)},line width=0.8pt,color=qqqqff] (14.,2.) ellipse (1.5811388300842066cm and 0.5cm);
\draw [line width=0.8pt,color=ffqqqq] (1.9403454512869363,4.980740238960319)-- (2.420877586191564,1.9747563825677759);
\draw [line width=0.8pt,color=ffqqqq] (6.059654548713036,4.980740238960319)-- (5.579122413808435,1.9747563825677756);
\draw [line width=0.8pt,color=ffqqqq] (6.941126598796515,4.980097028628181)-- (7.421658733708327,1.9741131722358034);
\draw [line width=0.8pt,color=ffqqqq] (11.060435696235837,4.980097028628181)-- (10.579903561323754,1.974113172235804);
\draw [line width=0.8pt,color=ffqqqq] (11.941468210113245,4.980764778688362)-- (12.422000344999574,1.9747809222954156);
\draw [line width=0.8pt,color=ffqqqq] (16.060777307504992,4.980764778688362)-- (15.580245172618495,1.974780922295425);
\draw (3.7532933600852645,3.8248044629759406) node[anchor=north west] {$A$};
\draw (8.748378280799969,3.8248044629759406) node[anchor=north west] {$B$};
\draw (13.74346320151467,3.8248044629759406) node[anchor=north west] {$C$};
\draw [rotate around={0.:(22.,5.)},line width=0.8pt,color=qqqqff] (22.,5.) ellipse (2.0611933923106553cm and 0.4985159982435148cm);
\draw [rotate around={0.:(22.,2.)},line width=0.8pt,color=qqqqff] (22.,2.) ellipse (1.5811388300840807cm and 0.5cm);
\draw [rotate around={0.:(26.99866583296127,2.003797497826752)},line width=0.8pt,color=qqqqff] (26.99866583296127,2.003797497826752) ellipse (2.061193392311186cm and 0.49851599824364395cm);
\draw [rotate around={0.:(27.098665832961156,5.003797497826753)},line width=0.8pt,color=qqqqff] (27.098665832961156,5.003797497826753) ellipse (1.5811388300833265cm and 0.49999999999972766cm);
\draw [rotate around={0.:(32.,5.)},line width=0.8pt,color=qqqqff] (32.,5.) ellipse (2.0611933923102117cm and 0.49851599824340737cm);
\draw [rotate around={0.:(32.,2.)},line width=0.8pt,color=qqqqff] (32.,2.) ellipse (1.5811388300843325cm and 0.5cm);
\draw [shift={(26.99866583296122,-7.996202502173249)},line width=0.8pt,color=qqqqff]  plot[domain=1.1381927597298631:2.0032041023654337,variable=\t]({1.*14.314932457471159*cos(\t r)+0.*14.314932457471159*sin(\t r)},{0.*14.314932457471159*cos(\t r)+1.*14.314932457471159*sin(\t r)});
\draw [line width=0.8pt,color=ffqqqq] (20.99866583296123,5.003797497826752)-- (26.99866583296122,2.003797497826752);
\draw [line width=0.8pt,color=ffqqqq] (26.99866583296122,2.003797497826752)-- (33.,5.);
\draw [line width=0.8pt,color=ffqqqq] (19.940935287399306,4.989358153853864)-- (20.421467422379592,1.9833742974629727);
\draw [line width=0.8pt,color=ffqqqq] (24.060244384979754,4.989358153853864)-- (23.579712249999087,1.9833742974629691);
\draw [line width=0.8pt,color=ffqqqq] (29.93860228675193,4.980537377703517)-- (30.419134421380146,1.9745535213050018);
\draw [line width=0.8pt,color=ffqqqq] (34.057911383621196,4.980537377703517)-- (33.57737924899268,1.9745535213050034);
\draw [line width=0.8pt,color=ffqqqq] (25.520467195594666,5.034275458755888)-- (24.939717087779034,2.027056468797238);
\draw [line width=0.8pt,color=ffqqqq] (28.678539400120933,5.023796277234118)-- (29.058893915714574,2.0190526722258566);
\draw (21.735599074658193,3.8248044629759406) node[anchor=north west] {$A$};
\draw (26.757684454403787,3.8248044629759406) node[anchor=north west] {$B$};
\draw (31.752769375118486,3.8248044629759406) node[anchor=north west] {$C$};
\draw [->,line width=.5pt] (17.5,3.5) -- (18.5,3.5);
\begin{scriptsize}
\draw [fill=black] (3.,5.) circle (2.5pt);
\draw [fill=black] (9.,5.) circle (2.5pt);
\draw [fill=black] (15.,5.) circle (2.5pt);
\draw [fill=black] (20.99866583296123,5.003797497826752) circle (2.5pt);
\draw [fill=black] (26.99866583296122,2.003797497826752) circle (2.5pt);
\draw [fill=black] (33.,5.) circle (2.5pt);
\draw (18,0) node[anchor=north] {Figure 1};
\end{scriptsize}
\end{tikzpicture}

\begin{rem}
We note that Adversary can easily answer in a way that his answer does not violate Condition 2 and 3, however the problem lies in the fact that Adversary must not violate Condition 1 with his answer.

\end{rem}

\begin{prop}\label{components}

Assume that Adversary has not violated Condition 1, 2, and 3 till a certain point. Then the components of $G$ are the same as the components of $G_R$.

\end{prop}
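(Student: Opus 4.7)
The plan is to prove the proposition by induction on the number of queries asked. Since $G_R$ is a subgraph of $G$, every $G_R$-component is automatically contained in a $G$-component, so the statement reduces to the reverse inclusion: one needs to show that every blue edge $(u,v)$ of $G$ is spanned by a red path in $G_R$, i.e.\ that $u$ and $v$ lie in a common $G_R$-component. By the inductive hypothesis this property holds before the query, so it is enough to check that every blue edge created by the new query has its endpoints in a single component of the updated $G_R$.

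In the inductive step I would go through the three branches $\bullet_1$, $\bullet_2$, $\bullet_3$ of the Adversary's strategy. Branch $\bullet_1$ is immediate: all three balls of the query already lie in a common $G_R$-component by induction, so every edge added (red or blue) stays inside that component. In branch $\bullet_2$, Condition 2 rules out the ``all--same'' answer and forces a ``two same, one different'' partition of the query. In subcase $\bullet_{2.1}$ the same-color pair $\{x_1,x_2\}$ lies inside the old component $A$, so the unique new blue edge $x_1x_2$ is inside $A$ and there is nothing to check. In subcase $\bullet_{2.2}$ the same-color pair straddles $A$ and $B$; the key observation is that the two new red edges both emanate from the ``different'' ball and therefore form a length-two red path inside $\{x_1,x_2,x_3\}$, which merges $A$ and $B$ into a single new $G_R$-component and simultaneously spans the new blue edge. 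Branch $\bullet_3$ is the same observation applied to three old components: the two red edges incident to the ``different'' ball form a length-two red path that at once fuses $A$, $B$, $C$ into one $G_R$-component and covers the new blue edge.

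The crucial observation, and the only nontrivial step, is the one used in subcases $\bullet_{2.2}$ and $\bullet_3$: the two red edges produced by any ``two same, one different'' answer automatically share the odd-one-out ball, so they form a length-two red path through it and hence merge precisely those old $G_R$-components that the query's balls met. Once this is made explicit, the case analysis is a straightforward bookkeeping exercise, and the main (mild) obstacle is to be careful in subcase $\bullet_{2.2}$, where the blue edge itself can connect vertices of two previously distinct $G_R$-components: one has to verify that the same query contributes the red path needed to keep the $G_R$- and $G$-component structures aligned.
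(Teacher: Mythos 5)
Your proposal is correct and rests on the same key observation as the paper's proof: a ``two same, one different'' answer makes the odd-colored ball a common red neighbour of the two endpoints of the new blue edge, so that blue edge is always spanned by a length-two red path, while an ``all same'' answer across distinct components is excluded by Condition 2. The paper phrases this as a first-offending-blue-edge argument rather than an induction over the strategy branches $\bullet_1$--$\bullet_3$, but the content is essentially identical.
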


\begin{proof}

The edges of $G_R$ are edges in $G$, so we only have to show that there are no blue edges between two components of $G_R$. Assume there is at least one. Consider the first answer, from which we get that kind of edge.

If this answer was that two of the balls (say $x$ and $y$) have the same color, the other (say $z$) has a different color, than $xy$ is the blue edge, but $x$ and $y$ are also in the same red component, because $z$ is a common neighbor of them in $G_R$.

If the answer was that the three balls have the same color, but the three balls did not came from the same component of $G_R$, than this answer violated Condition 2.

\end{proof}

\begin{prop}\label{number of edges}

Assume that the Adversary has not violated Condition 1, 2, and 3 till a certain point. Then the following two statements are true for every component $D$ of $G_R$. Recall that the two color classes of $D$ are $D_X$ and $D_Y$. Let us denote by $e_R(D)$ the number of red edges in $D$.

\vspace{2mm}

a) If $d(D)=0$, then $e_R(D) \ge |V(D)|=|D_X|+|D_Y|(=2|D_X|)$.

\vspace{1mm}

b) If $d(D)>0$, then $e_R(D) \ge|V(D)|+d(D)-2= 2\max(|D_X|,|D_Y|)-2$.

\end{prop}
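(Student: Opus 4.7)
The plan is to prove the proposition by induction on the number of queries asked so far during the ``Phase 1'' portion of the algorithm, in which Adversary has not yet violated any of Conditions 1, 2, 3. At every stage the statement is really one about the current graph $G_R$, and the structural update after a single query is controlled by the case split $\bullet_1,\bullet_2,\bullet_3$ in Adversary's strategy. Before any query is asked, each component is a single vertex with $|V(D)|=1$, $e_R(D)=0$ and $d(D)=1$, so case (b) holds with equality $0\geq 1+1-2=0$; this is the base case.

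For the inductive step I analyze the three query types. Case $\bullet_1$ is immediate: three balls inside one component $D$ give an answer determined by the coloring of $D$; since Condition 2 is in force the answer is ``two same, one different,'' which only adds edges inside $D$ (two red and one blue), so $|V(D)|$ and $d(D)$ do not change and $e_R(D)$ does not decrease. In case $\bullet_2$ the query merges two components $A$ and $B$ into a single new component $D$, and in both subcases $\bullet_{2.1}$ and $\bullet_{2.2}$ the query contributes exactly two red edges and one blue edge, so $|V(D)|=|V(A)|+|V(B)|$ and $e_R(D)=e_R(A)+e_R(B)+2$. A direct inspection of how the two bipartitions combine shows $d(D)\in\{d(A)+d(B),\,|d(A)-d(B)|\}$. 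Applying the inductive hypothesis to $A$ and $B$ and splitting on whether their imbalances are zero or positive gives the bound; the only subtle point is when $d(D)=0$, and this is handled by observing that either both $d(A),d(B)$ vanish (and then the ``$+2$'' from the new red edges suffices) or they are positive and cancel (so $d(A)+d(B)\geq 2$).

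Case $\bullet_3$ is the heart of the argument and where Condition 3 is used. Here three components $A,B,C$ merge into one $D$ via two new red edges and one blue edge, so $e_R(D)=e_R(A)+e_R(B)+e_R(C)+2$ and $|V(D)|=|V(A)|+|V(B)|+|V(C)|$. Write $a=d(A)\geq b=d(B)\geq c=d(C)\geq 0$. Tracking how the three bipartitions glue shows $d(D)\in\{a+b+c,\,a+b-c,\,a-b+c,\,|a-b-c|\}$, and Condition 3 rules out $d(D)=a+b+c$ whenever $c>0$; in that case $d(D)\leq a+b-c$. Summing the inductive estimates for $A,B,C$ yields $e_R(D)\geq |V(D)|+a+b+c-4$ when $a,b,c>0$, and combining with $a+b-c\geq d(D)$ gives $e_R(D)\geq |V(D)|+d(D)+2c-4$. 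Since $c\geq 1$ this proves (b); for (a), note that under Condition 3 the only way to reach $d(D)=0$ with three positive imbalances is $a=b+c$, which forces $a\geq 2$ and hence $2a-4\geq 0$, exactly the slack we need. The remaining subcases where one or more of $a,b,c$ is zero are easier because Condition 3 is vacuous there but the corresponding inductive bound loses less (case (a) of the hypothesis contributes $|V|$ rather than $|V|+d-2$), and a short check confirms the conclusion.

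The main obstacle is the arithmetic of case $\bullet_3$: one must see that the $2c$ of slack supplied by Condition 3 precisely absorbs the $-2$ losses from the three inductive bounds in excess of what the $+2$ red edges of the new query can pay for. Everything else is routine bookkeeping about how bipartitions combine when components merge.
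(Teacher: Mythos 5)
Your proposal is correct, and for part~b) --- the substantive half of the proposition --- it is essentially the paper's own proof: the same induction over the merge history of the components, the same identity $e_R(D)=e_R(A)+e_R(B)(+e_R(C))+2$ coming from Condition~2, the same use of $d(A\cup B)\le d(A)+d(B)$ in the two-component case, and the same use of Condition~3 to get $d(A\cup B\cup C)\le a+b-c$ when $c\ge 1$ (with the $c=0$ subcase falling back on part~a) for $C$). Where you genuinely diverge is part~a): the paper does not prove it by induction at all, but by a short parity argument --- $D$ is connected, so it has at least $|V(D)|-1$ red edges, and since every answer contributes $0$ or $2$ red edges to a component, $e_R(D)$ is even, forcing $e_R(D)\ge |V(D)|$. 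You instead fold a) into the same induction and verify the $d(D)=0$ cases by hand (both imbalances vanish, or two positive imbalances cancel so their sum is at least $2$, or in the three-component merge $a=b+c\ge 2$ supplies the needed slack); your case analysis there is complete and correct. The trade-off is that the paper's parity trick is shorter and dispenses with that extra bookkeeping, while your version keeps the whole proposition inside one self-contained induction and makes explicit that the b)-induction quietly relies on a) when a merged piece has imbalance zero. One cosmetic remark: in $\bullet_1$ you assert the answer is always ``two same, one different,'' but if the three balls lie in one color class of their component the forced answer is monochromatic, which is precisely a violation of Condition~2 and hence outside the proposition's hypothesis --- so your conclusion stands, just for a slightly different reason than stated.
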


\begin{proof}

a) $D$ is connected, so it has at least $|V(D)|-1=2|D_X|-1$ red edges. It is easy to see, that each component has even number of red edges, because every answer gives 0 or 2. Thus $e_R(D)$ is even and $e_R(D) \ge 2|D_X|-1$, so $e_R(D) \ge 2|D_X|=|V(D)|$.

\vspace{2mm}

b) We prove it by induction on $|V(D)|$. If $|V(D)|=1$, then $D$ is an isolated vertex, so $$0=e_R(D)\ge2\max(|D_X|, |D_Y|)-2=2\cdot1-2=0$$ is obviously true.

If a query comes from a component, then nothing is changed and we are done.

\vspace{2mm}

\textbf{Case 1:} Assume first that the query contains three balls from two components, call them $A$ and $B$. By induction we have $$e_R(A) \ge |V(A)|+d(A)-2,$$ and $$e_R(B) \ge |V(B)|+d(B)-2.$$ The answer of Adversary does not violate Condition 2 (by the assumption of the proposition), so $$e_R(A \cup B)=e_R(A)+e_R(B)+2.$$ Obviously we have $d(A \cup B) \le d(A)+d(B)$. By all of these, we get

$$e_R(A \cup B)=e_R(A)+e_R(B)+2 \ge$$
$$\ge |V(A)|+d(A)-2+|V(B)|+d(B)-2+2 \ge$$
$$\ge |V(A \cup B)|+d(A \cap B)-2,$$
and we are done in this case.

\vspace{2mm}

\textbf{Case 2:} Now assume that the query contains three balls from three components, call them 

$A$, $B$ and $C$, and let $d(A)=a$, $d(B)=b$, $d(C)=c$ (assume that $a \ge b \ge c$).

\vspace{2mm}

\hspace{2mm} \textbf{Case 2.1:} $c\ge 1$. 

\hspace{2mm}We will use that in the form $c-2 \ge -c$. By induction, we have $$e_R(A) \ge |V(A)|+d(A)-2,$$  $$e_R(B) \ge |V(B)|+d(B)-2,$$ 

\hspace{2mm}and $$e_R(B) \ge |V(C)|+d(C)-2.$$ 

\hspace{2mm}The answer does not violate Condition 2, so $$e_R(A \cup B \cup C)=e_R(A)+e_R(B)+e_R(C)+2,$$ 

\hspace{2mm}and Condition 3 provides that $$d(A \cup B \cup C) \le a+b-c.$$ 

\hspace{2mm}By all of these, we get
$$e_R(A \cup B \cup C)=e_R(A)+e_R(B)+e_R(C)+2 \ge$$
$$\ge |V(A)|+d(A)-2+|V(B)|+d(B)-2+|V(C)|+d(C)-2+2=$$
$$=|V(A \cup B \cup C)|+a+b+c-4 \ge$$
$$\ge |V(A \cup B \cup C)|+(a+b-c)-2 \ge$$
$$\ge |V(A \cup B \cup C)|+d(A \cup B \cup C)-2,$$

\hspace{2mm} and we are done in this case.

\vspace{2mm}

\hspace{2mm} \textbf{Case 2.2:} $c=0$. 

\hspace{2mm} In this case $e_R(B) \ge |V(C)|$ also holds by a). So we have

$$e_R(A \cup B \cup C)=e_R(A)+e_R(B)+e_R(C)+2 \ge$$
$$\ge |V(A)|+d(A)-2+|V(B)|+d(B)-2+|V(C)|+2=$$
$$=|V(A \cup B \cup C)|+a+b-2 =$$
$$= |V(A \cup B \cup C)|+(a+b-c)-2 \ge$$
$$\ge |V(A \cup B \cup C)|+d(A \cup B \cup C)-2,$$

\hspace{2mm}and we are done with this case and with the proof of Proposition \ref{number of edges}.

\end{proof}
\noindent
Proposition \ref{number of edges} immediately gives the following: 

\begin{cor}\label{dnot1} Assume that the Adversary has not violated Condition 1, 2, and 3 till a certain point.
If $d(D)\neq 1$, then $$e_R(D)\ge |V(D)|.$$

\end{cor}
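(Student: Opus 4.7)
The plan is to derive the corollary as an immediate case split on the value of $d(D)$, using the two bounds already established in Proposition \ref{number of edges}.

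First I would handle the case $d(D) = 0$. Here part (a) of Proposition \ref{number of edges} directly gives $e_R(D) \ge |V(D)|$, with nothing further to check. Second, since the hypothesis excludes $d(D) = 1$, the only remaining case is $d(D) \ge 2$. In that case I would apply part (b) of Proposition \ref{number of edges}, which yields
\[
e_R(D) \;\ge\; |V(D)| + d(D) - 2.
\]
Since $d(D) \ge 2$ means $d(D) - 2 \ge 0$, we obtain $e_R(D) \ge |V(D)|$, as desired.

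There is essentially no obstacle here: the reason the hypothesis $d(D) \neq 1$ is needed is precisely that in Proposition \ref{number of edges}(b) the additive term $d(D) - 2$ becomes $-1$ when $d(D) = 1$, which would only give $e_R(D) \ge |V(D)| - 1$ and thus fails to reach the bound $|V(D)|$. So the content of the corollary is just the observation that for every other value of $d(D)$, the expression $|V(D)| + d(D) - 2$ is at least $|V(D)|$ (or, for $d(D) = 0$, we appeal to the parity argument already embedded in part (a)). Thus the proof is a short two-line case distinction.
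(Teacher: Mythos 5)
Your proof is correct and is exactly the argument the paper intends: the corollary is stated as an immediate consequence of Proposition \ref{number of edges}, obtained by applying part (a) when $d(D)=0$ and part (b) with $d(D)-2\ge 0$ when $d(D)\ge 2$. Your added remark about why $d(D)=1$ must be excluded matches the role this hypothesis plays later in the paper.
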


%\begin{proof}

%\end{proof}

% 1. eset: 1,2,3 fenntartható
\vspace{3mm}

\noindent
Now we go back to the proof of our main goal: to prove that there are at least $n-4$ red edges.

\vspace{2mm}

\textbf{Case 1:} Adversary never violates Condition 1, 2, and 3 during the algorithm. By Proposition \ref{endblue} and Proposition \ref{components}, it is easy to see, that there is only one component of $G_R$. By Corollary \ref{dnot1}, it has at least $|V(G_R)|=n>n-4$ red edges. And that was our goal.

\vspace{2mm}

% 2. eset: nem, első 2/3-at megsértő válasz
\textbf{Case 2:} Adversary has to violate Condition 2 or 3 during the algorithm. Consider the first query, when he has to. From now on, we will use the notations $G$, $G_R$ and $G_B$ for the graphs before this query. We will show that $G_R$ already has at least $n-4$ edges.

\vspace{2mm}
% Tfh 3 komponenesen kérdés

\begin{tikzpicture}[line cap=round,line join=round,>=triangle 45,x=1.0cm,y=1.0cm, scale=0.4]
\clip(0.,-1.5) rectangle (33.,7.);
\draw [rotate around={0.:(4.,5.)},line width=2.pt,color=qqqqff] (4.,5.) ellipse (1.5811388300841887cm and 0.5cm);
\draw [rotate around={0.:(4.,2.)},line width=2.pt,color=qqqqff] (4.,2.) ellipse (2.06155281280883cm and 0.5cm);
\draw [line width=2.pt,color=ffqqqq] (2.420880672746554,5.025262918070062)-- (1.9399972976055266,2.0193859654021113);
\draw [line width=2.pt,color=ffqqqq] (6.060002702394472,2.019385965402111)-- (5.579119327253473,5.025262918070062);
\draw [rotate around={0.:(8.997194568313493,5.002055373959173)},line width=2.pt,color=qqqqff] (8.997194568313493,5.002055373959173) ellipse (2.061552812808869cm and 0.5cm);
\draw [rotate around={0.:(8.997194568313505,2.0020553739591733)},line width=2.pt,color=qqqqff] (8.997194568313505,2.0020553739591733) ellipse (1.581138830084283cm and 0.5cm);
\draw [line width=2.pt,color=ffqqqq] (6.937191865940886,4.982669408556438)-- (7.418075241061307,1.976792455888051);
\draw [line width=2.pt,color=ffqqqq] (10.576313895565736,1.976792455888048)-- (11.057197270686128,4.982669408556438);
\draw [rotate around={0.:(14.000997548694926,4.9950179672174135)},line width=2.pt,color=qqqqff] (14.000997548694926,4.9950179672174135) ellipse (2.0615528128089937cm and 0.5cm);
\draw [rotate around={0.:(14.,2.)},line width=2.pt,color=qqqqff] (14.,2.) ellipse (2.0603501754096705cm and 0.49501802523738647cm);
\draw [line width=2.pt,color=ffqqqq] (16.0625543732539,4.994929045580398)-- (16.06035014343667,1.9999127908524381);
\draw [line width=2.pt,color=ffqqqq] (11.939649824860368,1.9999919752723907)-- (11.93944172996487,4.9950097849157125);
\draw [rotate around={0.:(18.995301537902712,4.999109647439852)},line width=2.pt,color=qqqqff] (18.995301537902712,4.999109647439852) ellipse (2.061552812808885cm and 0.5cm);
\draw [rotate around={0.:(18.995301537902748,1.9991096474398504)},line width=2.pt,color=qqqqff] (18.995301537902748,1.9991096474398504) ellipse (1.5811388300841887cm and 0.5cm);
\draw [line width=2.pt,color=ffqqqq] (16.93529883554055,4.979723682036693)-- (17.41618221064828,1.9738467293680104);
\draw [line width=2.pt,color=ffqqqq] (20.574420865157627,1.973846729368006)-- (21.055304240265365,4.979723682036693);
\draw [rotate around={0.:(24.,5.)},line width=2.pt,color=qqqqff] (24.,5.) ellipse (1.5818773548663418cm and 0.5023305344487147cm);
\draw [rotate around={0.:(24.,2.)},line width=2.pt,color=qqqqff] (24.,2.) ellipse (2.060989435711475cm and 0.4976720346921731cm);
\draw [line width=2.pt,color=ffqqqq] (22.420148770165106,5.025416291453896)-- (21.94053820965522,2.0191580425818803);
\draw [line width=2.pt,color=ffqqqq] (26.05946179034487,2.019158042581881)-- (25.57985122983482,5.025416291453896);
\draw [rotate around={0.:(29.,5.)},line width=2.pt,color=qqqqff] (29.,5.) ellipse (1.580177089782026cm and 0.496950334610861cm);
\draw [rotate around={0.:(29.,2.)},line width=2.pt,color=qqqqff] (29.,2.) ellipse (2.0622951512536893cm and 0.5030519763257989cm);
\draw [line width=2.pt,color=ffqqqq] (27.421833604633726,5.0250616663049925)-- (26.939284753890238,2.019687236594405);
\draw [line width=2.pt,color=ffqqqq] (31.060715246109407,2.0196872365944043)-- (30.5781663953661,5.0250616663049925);
\draw (3.5062015000820566,3.8016238004467855) node[anchor=north west] {$A$};
\draw (8.494753364900113,3.8016238004467855) node[anchor=north west] {$B$};
\draw (13.509423302308841,3.8016238004467855) node[anchor=north west] {$C$};
\draw (8.494753364900113,5.812715389928365) node[anchor=north west] {$+b$};
\draw (18.4979751671269,5.812715389928365) node[anchor=north west] {$+1$};
\draw (23.512645104535626,2.80913704200133) node[anchor=north west] {$+1$};
\draw (18.4979751671269,1.320406904333148) node[anchor=north west] {$\mathcal{H}_{-1}$};
\draw (23.512645104535626,1.320406904333148) node[anchor=north west] {$\mathcal{H}_{+1}$};
\draw (25.99386200064932,6.80520214837382) node[anchor=north west] {$\mathcal{H}_{+}$};
\draw (3.5062015000820566,5.812715389928365) node[anchor=north west] {$\pm a$};
\draw (13.509423302308841,5.812715389928365) node[anchor=north west] {$\pm c$};
\draw [line width=2.pt] (16.5,6.)-- (16.5,1.);
\draw [line width=2.pt] (21.5,6.)-- (21.5,1.);
\draw [line width=2.pt] (26.5,5.5)-- (26.5,1.);
\draw (16.5,0) node[anchor=north] {Figure 2};
\end{tikzpicture}

 \hspace{4mm} \textbf{Case 2.1:} Assume that this query contains three balls from three components (note that before this query the components of $G$ and $G_R$ are the same by Proposition \ref{components}), call them $A$, $B$ and $C$, with $d(A)=a$, $d(B)=b$, and $d(C)=c$. We can assume that $a\ge b\ge c$. Certainly there is a balanced two-coloring $S \in \mathcal{S}$ consistent with this answer and the previous ones. Fix it, and denote the two color classes with $X$ and $Y$. For a component $D$, we will use the notation $D_X=V(D)\cap X$ and $D_Y=V(D)\cap Y$. 

Violating Condition 2 or 3 means that there is no balanced two-coloring $S' \in \mathcal{S}$ for which the balls in $A$ have the \textbf{same} colors as in $S$, and the balls in $B$ have the \textbf{other} colors as in $S$.

We can assume that $|B_X|\le |B_Y|$. Now we introduced another imbalance parameter: we give a sign for the imbalance. For a component $D$, the new definition of its imbalance is $$\od(D)=|D_X|-|D_Y|.$$ That means $\od(B)\le 0$.
Let $\mathcal{H}$ be the set of components of $G$ other than $A$, $B$ or $C$ and for $i \in \mathbb{Z}$ let
$$\mathcal{H}_{i}:=\{D\in \mathcal{H} \mid \od(D)= i\}.$$

In most cases we will change the color classes of some components i.e. exchange $D_X$ and $D_Y$, in order to get an other balanced two-coloring. (See Figure 2.)

\begin{prop}\label{Hb} We have

$$|\mathcal{H}_{1}|<b.$$

\end{prop}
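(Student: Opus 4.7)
The plan is to prove the proposition by contradiction. Assume $|\mathcal{H}_{1}|\ge b$ and manufacture a balanced two-coloring $S'\in \mathcal{S}$ that agrees with $S$ on every ball of $A$ but differs from $S$ on every ball of $B$; this is exactly the kind of coloring whose nonexistence characterizes violation of Condition 2 or 3 at the current query, so producing it yields the contradiction.

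To build $S'$, start from $S$ and perform two kinds of ``flips'' inside red-components, where flipping a component $D$ means swapping $D_X$ and $D_Y$. First, flip $B$ itself; second, pick any $b$ components $D^{(1)},\dots,D^{(b)}\in \mathcal{H}_{1}$ (available precisely because we assumed $|\mathcal{H}_{1}|\ge b$) and flip each of them. Leave every other component, including $A$ and $C$, untouched. Flipping $B$ shifts $|B_Y|-|B_X|=b$ balls into color class $X$ (recall $\od(B)=-b$ because we arranged $|B_X|\le|B_Y|$), while flipping each $D^{(j)}\in \mathcal{H}_{1}$ moves exactly one ball from $X$ to $Y$ (because $\od(D^{(j)})=+1$). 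The two shifts cancel, so $S'$ is again balanced and lies in $\mathcal{S}$.

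Next I would verify that $S'$ is still consistent with every previous answer. By Proposition \ref{components}, the components of $G$ coincide with those of $G_R$, so every blue edge of $G$ lies inside a single $D_X$ or a single $D_Y$ and every red edge joins $D_X$ and $D_Y$ of the same component. Swapping the two classes of an entire component therefore preserves both the ``same color'' constraint of each blue edge and the ``different color'' constraint of each red edge, and swapping several disjoint components acts on each one independently. Consequently $S'$ is consistent with every previous answer; by construction it equals $S$ on $A$ (we did not touch $A$) and differs from $S$ on all of $B$ (we flipped $B$), so $S'$ is exactly the forbidden coloring, contradiction.

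The single nontrivial step is the consistency check, and it rests entirely on Proposition \ref{components}: without knowing that blue edges cannot cross red-components, flipping $B$ alone could in principle destroy some blue-edge constraint. Once that proposition is invoked, the rest is a one-line imbalance count, which is why I expect the proof of this proposition to be very short.
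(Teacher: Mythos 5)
Your proof is correct and is exactly the paper's argument: the paper's one-line proof also flips $B$ together with $b$ components of $\mathcal{H}_1$ to produce the forbidden balanced two-coloring. You have merely filled in the balance count and the consistency check (via Proposition \ref{components}) that the paper leaves implicit.
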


\begin{proof}

If $|\mathcal{H}_1|\ge b$, then we could change the two color classes of $B$ and $b$ components from $\mathcal{H}_1$. But we saw that there are no such balanced two-coloring.

\end{proof}
% 2/1. eset: z<=k(+m)

\vspace{4mm}

\hspace{6mm} \textbf{Case 2.1.1:} $|\mathcal{H}_{-1}|\le a+c-1.$

\vspace{1mm}

\hspace{6mm}By Proposition \ref{number of edges} and Corollary \ref{dnot1}, there are at least $$n-|\cH_1|-|\cH_{-1}|+(a-2)+(b-2)+(c-2)$$ 

\hspace{6mm}red edges. Using $|\cH_{1}|+1\le b$ (Proposition \ref{Hb}) and $|\cH_{-1}|+1\le a+c$, we get that 

\hspace{6mm}there are at least
$$n+(b-|\cH_1|)+(a+c-|\cH_{-1}|)-6\ge n+1+1-6=n-4$$

\hspace{6mm}red edges.
% 2/2. eset: z>k(+m)

\vspace{4mm}

\hspace{6mm} \textbf{Case 2.1.2:} $|\cH_{-1}|\ge a+c$

\vspace{2mm}

\hspace{6mm}Now let $$\cH_{+}:=\{D\in \cH \mid 0 < \od(D) \le |\cH_{-1}|+1 \}.$$

\begin{prop}\label{sumdless1} %szumma d(C)<l

We have

$$\sum_{D\in \cH_{+}} \od(D)<b.$$

\end{prop}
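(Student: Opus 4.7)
The plan is to argue by contradiction: assuming $\sum_{D \in \cH_{+}} \od(D) \ge b$, I will exhibit a balanced two-coloring $S' \in \mathcal{S}$ consistent with all previous answers in which every ball of $A$ keeps the color it has in $S$ while every ball of $B$ receives the opposite color. Since Case 2 asserts that no such $S'$ exists, this yields the required contradiction.

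The starting point, using Proposition \ref{components}, is that the components of $G$ coincide with those of $G_R$, so every balanced two-coloring consistent with the previous answers is obtained from $S$ by independently flipping the two color classes $D_X \leftrightarrow D_Y$ inside some chosen set of components — such a flip preserves consistency because every red edge of a component lies between its two classes and every blue edge lies inside a single class. Hence constructing $S'$ reduces to selecting a subset $\cT \subseteq \cH$ to flip together with $B$, while $A$ (and, say, $C$) is left alone. Since flipping a component $D$ changes $|X| - |Y|$ by $-2\od(D)$ and $\od(B) = -b$, flipping $B$ shifts $|X| - |Y|$ by $+2b$, so I need a set $\cT$ with $\sum_{D \in \cT} \od(D) = b$.

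To build such a $\cT$, I enumerate the elements of $\cH_+$ in any order and greedily add them to an initially empty collection $\cT_0$ until the running sum $s$ first reaches $b$; the contradiction hypothesis guarantees this moment arrives. By the very definition of $\cH_+$ each added component contributes at most $|\cH_{-1}| + 1$, so the overshoot is controlled: $b \le s \le b + |\cH_{-1}|$. I then pick any $\cT_1 \subseteq \cH_{-1}$ of size exactly $s - b$, which is possible because $s - b \le |\cH_{-1}|$, and each of its elements contributes $-1$ to the sum. Setting $\cT := \cT_0 \cup \cT_1$ gives $\sum_{D \in \cT} \od(D) = s - (s - b) = b$, and the resulting $S'$ is balanced, consistent with all previous answers, agrees with $S$ on $A$, and is opposite to $S$ on $B$, delivering the contradiction.

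The only delicate point I foresee is the overshoot bound $s \le b + |\cH_{-1}|$, which is exactly why $\cH_+$ was introduced with the cap $|\cH_{-1}| + 1$; once this cap is exploited the greedy argument is mechanical. Beyond carefully verifying that independent flips of components yield a coloring consistent with every previous answer (a direct consequence of Proposition \ref{components}) I do not anticipate any substantial obstacle.
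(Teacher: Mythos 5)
Your proof is correct and follows essentially the same route as the paper: both argue by contradiction, flip $B$, greedily select a subfamily of $\cH_{+}$ whose signed imbalances sum to a value $s$ with $b \le s \le b+|\cH_{-1}|$ (using the cap $\od(D)\le |\cH_{-1}|+1$ to control the overshoot), and then compensate the residue with $s-b$ components from $\cH_{-1}$. Your write-up merely makes explicit two points the paper leaves implicit, namely the overshoot bound and the fact that flipping the two classes of a component preserves consistency with all previous answers.
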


\begin{proof}

Assume by contradiction that $\sum_{D\in \cH_{+}} \od(D)\ge b$. Then we first change the two color classes of $B$. The sum of the imbalances is $2b$. To compensate this, we can greedily change the color classes of some components $\cK \subseteq \cH_{+}$ such that 

$$b \le \sum_{D\in \cK} \od(D)\le b+|\cH_{-1}|.$$ 

After this, the total imbalance is $2k$, for some $k$ with $-2|\cH_{-1}|\le 2k\le 0$. And now we can again compensate this by changing the color classes of $k$ components from $\cH_{-1}$. We got a balanced two-coloring $S' \in \cS$ for which the balls in $A$ have the same colors as in $S$, and the balls in $B$ have the other colors as in $S$, which means Adversary should not have to violate Condition 2 or 3 with his answer, that is a contradiction.

\end{proof}

% 2/2/1. eset: létezik d(C)>z+1

\vspace{4mm}

\hspace{8mm} \textbf{Case 2.1.2.1:} there is a component $D\in \cH$ with $$\od(D)>|\cH_{-1}|+1.$$

\vspace{2mm}

\hspace{8mm}By Proposition \ref{number of edges} and Corollary \ref{dnot1}, there are at least $$n-|\cH_1|-|\cH_{-1}|+(a-2)+(b-2)+(c-2)+(\od(D)-2)$$ 

\hspace{8mm}red edges. Using that $|\cH_{1}|+1\le b$ and $|\cH_{-1}|+2\le \od(D)$, we have at least
$$n+(b-|\cH_1|)+(\od(D)-|\cH_{-1}|)-8+a+c\ge n-5+a+c\ge n-4$$

\hspace{8mm}red edges. 

% 2/2/2. eset: nem létezik ilyen C

\vspace{4mm}

\hspace{8mm} \textbf{Case 2.1.2.2:} there is no component $D\in \cH$ with $$\od(D)>|\cH_{-1}|+1.$$

\vspace{2mm}

\hspace{8mm}$S$ is a balanced two-coloring, so we have
$$\sum_{D : \ \od(D)>0} \od(D)+\sum_{D: \ \od(D)<0} \od(D)=0.$$

\hspace{8mm}By the assumption we have $\{D \mid \od(D)>0\}\subseteq \cH_{+}\cup\{A, C\}$, so
$$\sum_{D: \ \od(D)>0} \od(D)\le \sum_{D\in \cH_{+}} \od(D)+a+c\le b-1+a+c.$$

\hspace{8mm}On the other hand we have $\cH_{-1}\cup\{B\}\subseteq\{D \mid \od(D)<0\}$, so
$$\sum_{D: \ \od(D)<0} \od(D)\le -|\cH_{-1}|-b\le -a-c-b.$$

\hspace{8mm}That gives us
$$0=\sum_{D: \ \od(D)>0} \od(D)+\sum_{D: \ \od(D)<0} \od(D) \le b-1+a+c-a-c-b=-1,$$

\hspace{8mm}and this is a contradiction.

\vspace{4mm}

\hspace{4mm} \textbf{Case 2.2:} Assume that the first violating query contains three balls from two components, call them $A$ and $B$. We can assume that $d(A)\ge d(B)$. 

\vspace{2mm}

The proof in this case is analogous to the case with three components. We fix the balanced two-coloring $S \in \mathcal{S}$ with color classes $X$ and $Y$.
Violating Condition 2 or 3 means that there aren't any balanced two-coloring $S' \in \mathcal{S}$ for which the balls in $A$ have the same colors as in $S$, and the balls in $B$ have the other colors as in $S$.
We define the signed imbalance parameter the same way as in Case 2.1, that means $\od(B)\le 0$.
Let now $\cH$ be the set of components of $G$ other than $A$ and $B$ and for $i \in \mathbb{Z}$ let 
$$\cH_{i}:=\{D\in \cH \mid \od(D)= i\}.$$

\begin{prop}\label{Hb2} We have

$$|\mathcal{H}_{1}|<b.$$

\end{prop}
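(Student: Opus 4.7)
The plan is to mirror the argument of Proposition \ref{Hb} almost verbatim, since replacing three components by two changes nothing substantial in the swapping construction. I would argue by contradiction: suppose $|\cH_{1}|\ge b$. Starting from the fixed balanced two-coloring $S\in\cS$ with classes $X$ and $Y$, I would first flip the coloring inside $B$, i.e.\ swap $B_X$ and $B_Y$; since $\od(B)=-b$, this changes the signed total imbalance $\sum_D \od(D)$ by $+2b$.

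To restore balance I would then flip the coloring inside any $b$ components chosen from $\cH_{1}$. Each such component currently has $\od=+1$, so flipping it changes the signed total imbalance by $-2$, and doing so for $b$ of them contributes $-2b$; the net change is $0$, hence the resulting assignment $S'$ is still a balanced two-coloring, and it is still consistent with every previous answer because flipping a whole red-component preserves both the bipartition constraint imposed by red edges and the monochromaticity inside blue-connected sets. So $S'\in\cS$.

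By construction $A$ is untouched (every ball of $A$ has the same color in $S'$ as in $S$), while $B$ is fully flipped (every ball of $B$ has the opposite color in $S'$ to $S$). This is precisely the configuration whose nonexistence is the defining property of the first violation of Condition 2 or 3 in Case 2.2, giving the required contradiction.

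The only subtlety to verify is that the components we flip are pairwise disjoint and also disjoint from $A$ and $B$, so that no ball gets recoloured twice and the bookkeeping of the imbalance change is valid; this is automatic because $\cH_{1}\subseteq\cH$ and $\cH$ is defined to exclude $A$ and $B$. There is therefore no real obstacle here — the proof is a direct translation of Proposition \ref{Hb} with the pair $\{A,C\}$ collapsed to the singleton $\{A\}$, and the main thing to check is simply that the signs and magnitudes of the imbalance changes cancel, exactly as before.
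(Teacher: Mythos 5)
Your proof is correct and is essentially the paper's own argument: the paper proves Proposition~\ref{Hb2} by referring to the proof of Proposition~\ref{Hb}, which is exactly your swap of the color classes of $B$ together with $b$ components of $\cH_1$ to produce a balanced two-coloring contradicting the assumed violation of Condition 2 or 3. Your extra checks on the sign bookkeeping and disjointness are fine but add nothing beyond the paper's one-line justification.
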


\begin{proof} Same as the proof of Proposition \ref{Hb}.

\end{proof}

% 2/1. eset: z<=k(+m)

\vspace{4mm}

\hspace{6mm} \textbf{Case 2.2.1:} $|\cH_{-1}|\le a-1$

\vspace{2mm}

\hspace{6mm}By Proposition \ref{number of edges} and Corollary \ref{dnot1}, there are at least $$n-|\cH_1|-|\cH_{-1}|+(a-2)+(b-2)$$ 

\hspace{6mm}red edges. Using $|\cH_{1}|+1\le b$ (Proposition \ref{Hb2}) and $|\cH_{-1}|+1\le a$, we get that there 

\hspace{6mm}are at least

$$n+(b-|\cH_1|)+(a+c-|\cH_{-1}|)-4\ge n+1+1-4=n-2$$

\hspace{6mm}red edges.
% 2/2. eset: z>k(+m)

\vspace{4mm}

\hspace{6mm} \textbf{Case 2.2.2:} $|\cH_{-1}|\ge a$.

\vspace{2mm}

\hspace{6mm}Now let $$\cH_{+}:=\{D\in \cH \mid 0<\od(D)\le |\cH_{-1}|+1 \}.$$ 

\begin{prop}\label{sumdless12} %szumma d(C)<l

We have

$$\sum_{D\in \cH_{+}} \od(D)<b.$$

\end{prop}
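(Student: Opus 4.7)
The plan is to copy the argument of Proposition \ref{sumdless1} almost verbatim; the two-component situation is in fact a simplification, since there is no third component $C$ whose signed imbalance we need to track or cancel. Suppose for contradiction that $\sum_{D\in\cH_+}\od(D)\ge b$. The goal is to produce a balanced two-coloring $S'\in\cS$ that agrees with $S$ on $A$ but swaps the two color classes on $B$; the very definition of Case 2.2 (i.e.\ the query being a violating one) guarantees that no such $S'$ exists, so exhibiting one gives the desired contradiction.

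To build $S'$, I start from $S$ and first flip the component $B$ (that is, swap $B_X$ and $B_Y$). Since $\od(B)=-b$, this shifts the total signed imbalance from $0$ to $2b$. Next I greedily add components from $\cH_+$ to a set $\cK$ until the running sum $\sum_{D\in\cK}\od(D)$ first reaches or exceeds $b$. The assumption $\sum_{D\in\cH_+}\od(D)\ge b$ guarantees that this happens, and because each $D\in\cH_+$ satisfies $\od(D)\le |\cH_{-1}|+1$, the overshoot is at most $|\cH_{-1}|$, so
$$b\;\le\;\sum_{D\in\cK}\od(D)\;\le\;b+|\cH_{-1}|.$$
Flipping every component of $\cK$ decreases the current total imbalance by $2\sum_{D\in\cK}\od(D)$, leaving it equal to $2k$ for some integer $k$ with $-|\cH_{-1}|\le k\le 0$. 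Finally I flip exactly $|k|$ components of $\cH_{-1}$; each such flip increases the total imbalance by $2$, restoring it to $0$. This last step is feasible precisely because $|k|\le |\cH_{-1}|$.

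At every stage only whole components of $G_R$ are flipped, so each red edge still connects balls of opposite color and each blue edge still connects balls of the same color in the resulting coloring $S'$; hence $S'$ remains consistent with every prior answer, and it lies in $\cS$ since it is balanced by construction. By design, $A$ was never flipped while $B$ was flipped exactly once, contradicting the hypothesis that Adversary had to violate Condition 2 or 3 on this query. The only delicate point is the greedy-window estimate $b\le \sum_{D\in\cK}\od(D)\le b+|\cH_{-1}|$, which I expect to be the main thing the referee would check; it follows from the bound $\od(D)\le |\cH_{-1}|+1$ that defines $\cH_+$, so no genuine obstacle arises and the proof can be presented as "Same as the proof of Proposition~\ref{sumdless1}."
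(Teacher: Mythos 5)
Your proposal is correct and is essentially the paper's own argument: the paper proves Proposition \ref{sumdless12} by declaring it ``the same as the proof of Proposition \ref{sumdless1},'' and what you write out is exactly that greedy compensation argument (flip $B$, flip a subfamily $\cK\subseteq\cH_+$ landing in the window $[b,\,b+|\cH_{-1}|]$, then flip $|k|$ components of $\cH_{-1}$ to restore balance), correctly specialized to the two-component case. No discrepancy to report.
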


\begin{proof}
The proof is the same as the proof of Proposition \ref{sumdless1}.
\end{proof}

% 2/2/1. eset: létezik d(C)>z+1

\vspace{4mm}

\hspace{8mm} \textbf{Case 2.2.2.1:} there is a component $D\in \cH$ with $\od(D)>|\cH_{-1}|+1$.

\vspace{2mm}

\hspace{8mm}By Proposition \ref{number of edges} and Corollary \ref{dnot1}, there are at least $$n-|\cH_1|-|\cH_{-1}|+(a-2)+(b-2)+(\od(D)-2)$$ 

\hspace{8mm}red edges. Using $|\cH_{1}|+1\le b$ and $|\cH_{-1}|+2\le \od(D)$, we get that there are at least
$$n+(b-|\cH_1|)+(\od(D)-|\cH_{-1}|)-6+a\ge n-3+a\ge n-2$$

\hspace{8mm}red edges. 

% 2/2/2. eset: nem létezik ilyen C

\vspace{4mm}

\hspace{8mm} \textbf{Case 2.2.2.2:} there is no component $D\in \cH$ with $\od(D)>|\cH_{-1}|+1$.

\vspace{2mm}

\hspace{8mm}It is a balanced two-coloring, so
$$\sum_{D:\ \od(D)>0} \od(D)+\sum_{D:\ \od(D)<0} \od(D)=0.$$

\hspace{8mm}By the assumption $\{D \mid \od(D)>0\}\subseteq \cH_{+}\cup\{A\}$ we have
$$\sum_{D: \ \od(D)>0} \od(D)\le \sum_{D\in \cH_{+}} \od(D)+a\le b-1+a.$$

\hspace{8mm}On the other hand, $\cH_{-1}\cup\{B\}\subseteq\{D \mid \od(D)<0\}$, therefore

$$\sum_{D: \od(D)<0} \od(D)\le -|\cH_{-1}|-b \le -a-b.$$

\hspace{8mm}That gives us
$$0=\sum_{D:\ \od(D)>0} \od(D)+\sum_{D:\ \od(D)<0} \od(D) \le b-1+a-a-b=-1,$$

\hspace{8mm}and this is a contradiction.

\bigskip

We are done with the proof of the lower bound of Theorem \ref{m1a} \textbf{(i)}.

\qed

\subsection{Proof of the lower bound in Theorem \ref{m1a} \textbf{(ii)}}

% mese, motiváció, miben változtatunk, a végén zölddé válás

Now we sketch the proof of Theorem \ref{m1a} \textbf{(ii)} that is similar to the one of \textbf{(i)}. These proofs are connected a similar way to the connection of the proof of the even and odd case in Theorem \ref{general}. The problem again lies in the fact that there is no exactly balanced two partition of odd many balls.

\vspace{2mm}

Now we divide Adversary's strategy into two phases. In Phase 1 he can violate Conditions 2-4 (see later) and wants to produce at least $n-O(\log_2 n)$ red edges in the auxiliary graph.

\vspace{2mm}

\textbf{Phase 1:} Before defining the strategy of Adversary we make some initial comments. 

\vspace{2mm}

During Phase 1 the auxiliary graph will be the same as in case of even $n$. We will call \textit{components} the components of $G_R$, but we will see that an analogue of Proposition \ref{components} is true, so the components of $G_R$ are the same as the components of $G$. The imbalance function $d$ will also be the same. %(pirosban of, de kesobb leirjuk, h piros of= piroskek of).

We will call a component $D$ a \textit{deficient component}, if it has exactly $|V(D)|-1$ red edges. Note that if a component is not deficient, then it has more than $|V(D)|-1$ red edges.

We will call a ball of \textit{potential third color} (or p3c ball for short) if its blue degree is zero and it is contained in a deficient component. Note that an isolated vertex is a deficient component and it is p3c.

\vspace{2mm}

During Phase 1 Adversary will have four conditions on his strategy: 
\vspace{2mm}

\textbf{Condition 1:} there is a 3-coloring of the balls with color class sizes $\frac{n-1}{2}$, $\frac{n-1}{2}$, $1$, consistent with the answers. 

\vspace{2mm}

\textbf{Condition 2}, and \textbf{Condition 3} will be the same as in the case of even $n$.

\vspace{2mm}

\textbf{Condition 4:} there is an almost-balanced two-coloring of the balls (that is a three coloring, where the sizes of the color classes are $\frac{n+1}{2}$, $\frac{n-1}{2}$, 0), that is consistent with the answers and the larger contains a ball of potential third color.

Note that Condition 4 implies Condition 1. Indeed, if there is an almost-balanced two-coloring with color classes $X'$ and $Y'$ ($|X'|>|Y'|$), which is consistent with the previous answers, and there is a p3c vertex $z$ in the larger color class, then the three-coloring $X=X'\setminus\{z\}$, $Y=Y'$, $Z=\{z\}$ is consistent with Condition 1 and the previous answers. 

%We also want that if we don't have enough red edges, there must be an almost-balanced two-coloring consistent with the answers. We will call this Condition 4 with a little restriction. . Condition 2 and 3 will be the same. The auxiliary graph is the same for now.

\vspace{2mm}

Now we define the answers of Adversary during Phase 1.

The answers will be the same as in the case of even $n$, except there is a little restriction in the case when the query contains three p3c balls from three different deficient components, which will we describe immediately.

But first note, that it is easy to see that a deficient component $D$ with more than one vertex, must have imbalance $d(D)=1$ and it had to be made from three different deficient components. (Any other way we get a component with more red edges. )

\vspace{2mm}

Now we define the answer of Adversary for a query containing three p3c balls from three different deficient components:

$\bullet$ If all the three p3c balls were on the larger size of their components, then Adversary answers that the one with the smallest red-degree has different color, and the other two have the same color.

$\bullet$ If all the three p3c balls were on the smaller size of their components, then Adversary answers that the one with the smallest red-degree has different color, and the other two have the same color.

$\bullet$ If exactly two of the three p3c balls were on the larger size of their components, then Adversary answers that the one (of the two) with the smaller red-degree has different color, and the other has the same color as the third ball.

$\bullet$ If exactly two of the three p3c balls were on the smaller size of their components, then Adversary answers that the one (of the two) with the smaller red-degree has different color, and the other has the same color as the third ball.

\vspace{2mm}

The following propositions are similar to Proposition \ref{components}, \ref{number of edges} and Corollary \ref{dnot1}, and the proofs are the same.

\begin{prop}\label{components 2}

Assume that Adversary has not violated Condition 1, 2, 3, and 4 till a certain point. Then the components of $G$ are the same as the components of $G_R$.

\end{prop}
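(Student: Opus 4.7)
The plan is to mirror the proof of Proposition \ref{components} almost verbatim. Since every red edge of $G_R$ is also an edge of $G$, the only thing to verify is that no blue edge of $G$ joins two distinct components of $G_R$.

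I would argue by contradiction: assume some blue edge $e$ crosses between two $G_R$-components, and consider the first query $\{x,y,z\}$ whose answer creates such a cross-component blue edge. Adversary's answer --- whether given by the default rule inherited from the even case or by one of the four special rules for queries containing three p3c balls from different deficient components --- is always either of 2+1 type (two balls of one color, the third of another) or monochromatic 3+0.

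In the 2+1 case, say $\{x,y\}$ share a color and $z$ differs; then the edges $xz$ and $yz$ become red, so $x, y, z$ lie in a common component of $G_R$ after the query, and the newly introduced blue edge $xy$ is contained inside this single red component, contradicting the choice of $e$. In the monochromatic case, the argument of Proposition \ref{components} applies unchanged: if the three balls did not already lie in one component of $G_R$, then reporting them all of the same color would violate Condition 2, which by hypothesis has not happened; hence they already share a red component and no new cross-component blue edge is created.

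The main (very mild) obstacle is simply confirming that the novel answers introduced in Phase 1 for queries with three p3c balls still fall inside the 2+1 / 3+0 dichotomy. This is immediate from their definition: in each of the four sub-cases, exactly one ball is reported of different color from the other two, so these answers are 2+1 and the first case of the argument above handles them. Conditions 1, 3, and 4 play no role in this particular proposition; only Condition 2 is used, exactly as in the even case.
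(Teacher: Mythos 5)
Your proof is correct and follows exactly the route the paper intends: the paper's own proof of Proposition \ref{components 2} is just the remark ``similar to the proof of Proposition \ref{components}'', and your argument reproduces that proof while additionally checking (correctly) that the new p3c answers are all of the 2+1 form, so the same dichotomy applies.
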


\begin{proof} Similar to the proof of Proposition \ref{components}.

\end{proof}

\begin{prop}\label{number of edges 2}

Assume that Adversary has not violated Condition 1, 2, 3, and 4 till a certain point. Then the following two statements are true for every component $D$ of $G_R$. Recall that the two color classes of $D$ are $D_X$ and $D_Y$. Let us denote by $e_R(D)$ the number of red edges in $D$.

\vspace{2mm}

a) If $d(D)=0$, then $e_R(D) \ge |V(D)|=|D_X|+|D_Y|(=2|D_X|)$.

\vspace{1mm}

b) If $d(D)>0$, then $e_R(D) \ge|V(D)|+d(D)-2= 2\max(|D_X|,|D_Y|)-2$.

\end{prop}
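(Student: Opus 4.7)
The plan is to follow the inductive proof of Proposition \ref{number of edges} with minimal modification, since the statements of (a) and (b) are identical and Conditions 1, 2, 3 are still in force during Phase 1 by the hypothesis of the proposition. I would induct on $|V(D)|$ and, as before, examine the last query $Q$ whose answer caused the current component $D$ to form (if $Q$ lies inside a single pre-existing component, nothing in $D$ changed and induction applies trivially).

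The base case $|V(D)| = 1$ is immediate: an isolated vertex has $e_R(D) = 0$, $d(D) = 1$, and $2\max(|D_X|,|D_Y|) - 2 = 0$, so (b) holds. For the inductive step, if $Q$ joins two components $A$ and $B$, Condition 2 contributes exactly two new red edges to $D$, and the computation from Case 1 of Proposition \ref{number of edges} yields the bound by combining the inductive inequalities for $A$ and $B$ with the elementary estimate $d(A\cup B) \le d(A) + d(B)$. If $Q$ joins three components $A, B, C$ with $d(A) = a \ge d(B) = b \ge d(C) = c$, Condition 2 again gives two new red edges, and the subcases $c \ge 1$ and $c = 0$ are treated exactly as in Cases 2.1 and 2.2 of the original proof, using part (b) for all three pieces in the first subcase and part (a) for $C$ in the second, together with the bound $d(A \cup B \cup C) \le a + b - c$ furnished by Condition 3.

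The only new feature of Phase 1 that requires checking, and what I expect to be the main obstacle, is the special answering rule Adversary uses when $Q$ contains three p3c balls from three distinct deficient components. For the induction above to apply without change, this rule must not violate Condition 2 or Condition 3. Every deficient component has imbalance equal to $1$ (a single vertex trivially; a larger one by the remark immediately preceding the rule's definition), so in this subcase $a = b = c = 1$. The rule always outputs ``two balls have the same color, the third differs,'' hence it still produces exactly two new red edges, as Condition 2 requires. For Condition 3, one tracks on which side of its deficient component each p3c ball was sitting; the rule's prescription of making the ball of smallest red-degree on the relevant side the ``different'' one is designed so that a short case analysis bounds the imbalance of the merged component by $a + b - c = 1$, and in particular it is not $a + b + c = 3$. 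Once Conditions 2 and 3 are confirmed to remain satisfied after every step of Phase 1, the argument of Proposition \ref{number of edges} transfers verbatim and yields both (a) and (b).
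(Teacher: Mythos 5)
Your proposal matches the paper's approach: the paper's proof of this proposition is literally just ``Similar to the proof of Proposition \ref{number of edges},'' i.e.\ the same induction on $|V(D)|$ over the last merging query, with Conditions 2 and 3 supplying the two new red edges and the bound $d(A\cup B\cup C)\le a+b-c$. Your additional check that the special p3c answering rule still respects Conditions 2 and 3 (since every deficient component has imbalance $1$, so $a=b=c=1$ and the merged imbalance is $1\ne 3$) is exactly the point that makes the transfer legitimate, so the proposal is correct.
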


\begin{proof} Similar to the proof of Proposition \ref{number of edges}.

\end{proof}
\noindent
Proposition \ref{number of edges 2} immediately gives the following: 

\begin{cor}\label{dnot2} Assume that Adversary has not violated Condition 1, 2, 3, and 4 till a certain point.
If $d(D)\neq 1$, then $$e_R(D)\ge |V(D)|.$$

\end{cor}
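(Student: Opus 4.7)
My plan is to reduce the corollary to the two cases already provided by Proposition \ref{number of edges 2}. Since $d(D)=\big||D_X|-|D_Y|\big|$ is a non-negative integer and the hypothesis excludes $d(D)=1$, the only possibilities are $d(D)=0$ and $d(D)\ge 2$. I would first handle $d(D)=0$ by direct appeal to part (a) of the proposition, which already gives $e_R(D)\ge|V(D)|$. Then for $d(D)\ge 2$ I would apply part (b) and use $d(D)-2\ge 0$ to obtain
$$e_R(D)\ge|V(D)|+d(D)-2\ge|V(D)|,$$
which is the desired bound.

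I would then pause to explain why the value $d(D)=1$ is singled out: it is precisely the value for which part (b) of Proposition \ref{number of edges 2} degrades to $e_R(D)\ge|V(D)|-1$, losing a single red edge compared with the target. This one-edge shortfall is exactly what the notions of \emph{deficient component} and \emph{p3c ball}, introduced at the start of Phase 1, are designed to manage separately in the odd case. Recording the stronger bound in the corollary then lets the counting argument in Phase 1 treat all other components uniformly and concentrate its special care on components with imbalance $1$.

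Because the corollary is an immediate consequence of Proposition \ref{number of edges 2}, I do not anticipate any real obstacle in the proof itself; the only care required is to note that $d(D)$ is integer-valued and non-negative, which is clear from its definition. The substantive work lies elsewhere --- in Proposition \ref{number of edges 2} itself (whose odd-case proof is asserted to parallel the even-case template, now taking Conditions 1--4 and the special rule for queries on three p3c balls into account) and in the Phase 1 bookkeeping around deficient components --- not in the corollary, which serves only as a convenient repackaging of the proposition for use in the forthcoming edge count.
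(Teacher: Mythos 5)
Your proof is correct and is exactly the argument the paper intends: the corollary is stated as an immediate consequence of Proposition \ref{number of edges 2}, obtained by splitting into $d(D)=0$ (part (a)) and $d(D)\ge 2$ (part (b) with $d(D)-2\ge 0$). The additional commentary on why $d(D)=1$ is excluded is accurate but not part of the proof itself.
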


\begin{lemma}\label{logdegree}

Assume that Adversary has not violated Condition 1, 2, 3, and 4 till a certain point. Then the following two statements are true for every deficient component $D$. 

\vspace{2mm}

a) There exists a p3c vertex in $D$.

\vspace{1mm}

b) There exists a p3c vertex in $D$ with red-degree at most $2\log_2(|V(D)|)$.

\end{lemma}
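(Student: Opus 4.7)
My plan is to prove both parts by a single induction on $|V(D)|$, using the structural observation recorded in the paragraph immediately before the lemma: every deficient component $D$ with $|V(D)|>1$ arises from exactly one query that merges three previously-deficient components $D_1,D_2,D_3$ under a ``two-same, one-different'' answer. (Any other way of growing a component produces either too few or too many red edges to be deficient.) Throughout the inductive step I write $v_i\in D_i$ for the ball of $D_i$ appearing in that query and $n_i=|V(D_i)|$, so $|V(D)|=n_1+n_2+n_3$.

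The base case $|V(D)|=1$ is trivial: the isolated vertex is p3c with red-degree $0\le 2\log_2 1$. For (a), induction supplies each $D_i$ with a p3c vertex $w_i$. If $w_i\ne v_i$ for some $i$, then $w_i$ is untouched by the new query, its blue-degree stays $0$, and since $D$ is deficient, $w_i$ is p3c in $D$. Otherwise the query contains three p3c balls from three different deficient components, so Adversary's special rule applies; that rule designates some $v_*\in\{v_1,v_2,v_3\}$ as the ``different'' ball, and this ``different'' ball receives only red edges in the answer, hence is p3c in $D$.

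For (b), strengthen the inductive hypothesis to part (b) itself and let $u_i$ denote the p3c vertex of smallest red-degree in $D_i$, so that $\deg_R(u_i)\le 2\log_2 n_i$. If $u_i\ne v_i$ for some $i$, then $u_i$ is untouched, is p3c in $D$, and has red-degree at most $2\log_2 n_i\le 2\log_2|V(D)|$, which completes the step. Otherwise $u_i=v_i$ for every $i$, so $\deg_R(v_i)\le 2\log_2 n_i$ for all $i$. The special rule now names a ``different'' vertex $v_*$, which is p3c in $D$ with red-degree $\deg_R(v_*)+2$. If all three $v_i$ lie on the same side (all in the larger, or all in the smaller, color class) of their components, then $v_*$ realises $\min_i\deg_R(v_i)$ and
\[
\deg_R(v_*)+2\le 2\log_2\min_i n_i+2\le 2\log_2\tfrac{|V(D)|}{3}+2=2\log_2|V(D)|-(2\log_2 3-2)<2\log_2|V(D)|.
\]
If two of the $v_i$ (say $v_1,v_2$) lie on one side and $v_3$ on the other, then $v_*$ minimises $\deg_R$ only over $\{v_1,v_2\}$ and
\[
\deg_R(v_*)+2\le 2\log_2\min(n_1,n_2)+2\le 2\log_2\tfrac{n_1+n_2}{2}+2=2\log_2(n_1+n_2)\le 2\log_2|V(D)|.
\]

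The main obstacle is the ``two-on-one-side'' case of (b): Adversary is forced to minimise red-degree over only two of the three components, and one has to check that the elementary inequality $\min(n_1,n_2)\le(n_1+n_2)/2$ exactly absorbs the extra ``$+2$'' that $v_*$ picks up from its two new red edges. Part (a) is by contrast nearly free, once the structural merge picture is in place and one notes that Adversary's special rule always selects the ``different'' ball from among queried p3c vertices, so non-queried p3c vertices are unaffected and the special-rule output is itself p3c.
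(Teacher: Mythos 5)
Your proof of part (b) is correct and essentially identical to the paper's: the same induction over the three-component merge, with the observation that Adversary's tie-breaking rule always lets him minimise red-degree over at least two of the three components, and the inequality $2\min(n_1,n_2)\le n_1+n_2\le |V(D)|$ absorbing the extra $+2$. For part (a) you induct on the merge structure, whereas the paper just counts: a deficient component arises from $\frac{|V(D)|-1}{2}$ queries and hence contains only $\frac{|V(D)|-1}{2}$ blue edges, so the total blue degree is $|V(D)|-1<|V(D)|$ and some vertex has blue-degree $0$; both arguments are valid.
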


\begin{proof}

a) There are $|V(D)|$ vertices in $D$, so there are $|V(D)|-1$ red edges. Thus there were $\frac{|V(D)|-1}{2}$ questions that made this component. Hence Condition 2 provides us that there are $\frac{|V(D)|-1}{2}$ blue edges in $D$. That means there must be at least one vertex that has blue-degree 0, so it is a p3c vertex.

b) We will prove that by induction. For an isolated vertex it is true. Assume that there were a query containing balls from components $D_1$, $D_2$, and $D_3$. If any of the balls was not p3c, then in its component there will be a p3c ball which will be good. If all the three balls are p3c, then Adversary has the freedom to choose the ball with the different color (the one that will get two red edges) from at least two balls. Assume that this two ball is from $D_1$ and $D_2$, so by induction, their red-degree is at most $2\log_2(|V(D_1)|)$ and $2\log_2(|V(D_2)|)$. Adversary can choose the one with the smaller degree, so it will be a p3c ball, and its red degree is at most
$$\min(2\log_2(|V(D_1)|),2\log_2(|V(D_2)|))+2 = 2(\log_2(\min(|V(D_1)|,|V(D_2)|))+1) =$$
$$= 2\log_2(2\min(|V(D_1)|,|V(D_2)|)) \le 2\log_2(|V(D_1)|+|V(D_2)|) \le$$
$$\le 2\log_2(|V(D_1)|+|V(D_2)|+|V(D_3)|) = 2\log_2(|V(D)|).$$

\end{proof}

In a coloring provided by Condition 1, there is no plurality ball. In a coloring provided by Condition 4, there are plurality balls. So if the Questioner can solve the Plurality problem, then Adversary must violate some Condition at some point, however it cannot be Condition 1. Now we will consider the first answer, where the Adversary had to violate Condition 2, 3, or 4.

\begin{lemma} \label{viol23}

Assume that Adversary has not violated Condition 1, 2, 3, and 4 till a certain point, but he has to violate Condition 2 or 3 with his next answer for a query. Then we already have $n-10$ red edges.

\end{lemma}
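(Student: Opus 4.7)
The plan is to mimic Case 2 of the lower bound proof for Theorem \ref{m1a} \textbf{(i)}, inserting the extra bookkeeping needed to handle the distinguished ``third color'' ball and the potential-third-color (p3c) vertex required by Condition 4. Before the offending query, let $S \in \mathcal{S}$ be an almost-balanced two-coloring consistent with all earlier answers (with $|X| = (n+1)/2$ the larger class and $|Y| = (n-1)/2$ the smaller class), and fix a p3c vertex $v \in X$ guaranteed by Condition 4. For each component $D$ of $G_R$ set $\od(D) = |D_X| - |D_Y|$. The reason Adversary must violate Condition 2 or 3 is that every answer consistent with those two conditions forces the resulting $G$ to be inconsistent either with every almost-balanced two-coloring (Condition 4) or with every three-coloring of sizes $(n-1)/2,(n-1)/2,1$ (Condition 1). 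In particular, there is no almost-balanced $S' \in \mathcal{S}$ with a p3c vertex on the larger side in which the balls of $A$ keep their $S$-colors and the balls of $B$ get their $S$-colors swapped.

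I would split the proof into the same two outer cases as in the even argument. In \textbf{Case A}, the query meets three components $A, B, C$ with imbalances $a \ge b \ge c \ge 0$, where WLOG $\od(B)\le 0$. Writing $\cH$ for the remaining components and $\cH_i = \{D\in\cH : \od(D)=i\}$, I would reprove the analogues of Propositions \ref{Hb} and \ref{sumdless1} by the same greedy swap argument, now restricted so that the final coloring keeps at least one p3c vertex on the larger side. The presence of the p3c vertex $v$, together with the possibility that one of the swapped components contains $v$, costs a bounded amount of slack in the inequalities: $|\cH_{1}|$ may fall short of $b$ by a larger additive constant, and similarly for $|\cH_{-1}|$ relative to $a+c$. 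Case A splits further by whether $|\cH_{-1}|$ is small or large and (in the large subcase) by the existence of an unusually unbalanced component in $\cH$, exactly paralleling Cases 2.1.1, 2.1.2.1, 2.1.2.2 of the even proof. In \textbf{Case B}, the query meets only two components $A,B$ with $a \ge b$ and the argument is the direct analogue of Case 2.2.

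To conclude, I would apply Proposition \ref{number of edges 2} and Corollary \ref{dnot2}: each component $D$ with $d(D)\neq 1$ contributes at least $|V(D)|$ red edges, while each of the finitely many ``imbalance-one'' components (those in $\cH_1 \cup \cH_{-1}$ together with whichever of $A,B,C$ happen to have imbalance $1$) loses one edge relative to its vertex count, and each of $A,B,C$ contributes the extra $d(D)-2$ term from part b). Summing and using the Condition-4 versions of the $|\cH_1|<b$ and $\sum_{\cH_+}\od(D)<b$ style bounds produces a total of at least $n + (b - |\cH_1|) + (a+c - |\cH_{-1}|) - K$ red edges in Case A (and its analogue in Case B), where the constant $K$ absorbs the fixed overhead $-6$ from the imbalances of $A,B,C$ together with a further $-4$ of slack coming from the p3c-preservation requirement and the possibility that the component of $v$ is itself one of the special components. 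Since $b - |\cH_1|\ge 1$ and $a+c - |\cH_{-1}|\ge 1$ in the relevant subcases, this yields at least $n-10$ red edges in every subcase.

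The main obstacle I anticipate is not the arithmetic but the Condition-4 bookkeeping: when executing the greedy color-swap used in the analogue of Proposition \ref{sumdless1}, I must argue that a p3c vertex survives on the resulting larger class. This requires either showing that some component in $\cH_{+}$ other than the component of $v$ can absorb the imbalance, or, failing that, reserving the component of $v$ and paying a bounded penalty in the edge count. Handling this uniformly across all subcases---while keeping the edge-deficit bounded by the claimed constant 10---is where the proof departs from a verbatim translation of the even case and where Lemma \ref{logdegree}\,b) is used implicitly to guarantee that a convenient p3c vertex always exists.
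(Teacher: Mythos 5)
Your proposal follows essentially the same route as the paper: the paper's own argument for this lemma is likewise a sketch that reruns Cases 2.1 and 2.2 of the even-$n$ lower bound, weakening the bound of Proposition \ref{Hb} (to account for preserving a p3c vertex in the larger class after the swap) and replacing the exact balance identity $\sum_D \od(D)=0$ by the inequality $-1\le\sum_D \od(D)$ forced by the almost-balanced coloring, which is precisely the bookkeeping you describe. The constant accounting you outline (the $-6$ overhead from $A,B,C$ plus the extra slack from the p3c and imbalance-one adjustments, landing at $n-10$) matches the paper's intended modification of the constants.
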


\begin{proof}[Sketch of the proof]
The proof is very similar to the proofs of \textbf{Case 2.1} and \textbf{Case 2.2} in the proof of the even case. 

We just highlight some differences that create only some modification of the constants: 

$\bullet$  in the analogue of Proposition \ref{Hb} and Proposition \ref{Hb2} we need $|\cH_1|<b+1$, because we need one p3c vertex in the larger class after the color-changing. 

$\bullet$ at the end, instead of $$0=\sum_{D : \ \od(D)>0} \od(D)+\sum_{D: \ \od(D)<0} \od(D),$$ 

we need to use $$-1\le\sum_{D : \ \od(D)>0} \od(D)+\sum_{D: \ \od(D)<0} \od(D).$$

\end{proof}

\begin{lemma} \label{viol4}

Assume that Adversary has not violated Condition 1, 2, 3, and 4 till a certain point, but he has to violate Condition 4 with his next answer for a query. Then there is at least $n-5$ red edges in $G$.

\end{lemma}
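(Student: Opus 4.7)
The plan is to adapt the weight-counting framework from Lemma \ref{viol23} to the setting where the violated condition is the softer Condition 4 rather than Conditions 2 or 3.

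First I would combine Proposition \ref{number of edges 2} and Corollary \ref{dnot2} to observe that for every component $D$ of $G_R$ one has $e_R(D) \ge |V(D)|$ unless $D$ is deficient, in which case $e_R(D) = |V(D)| - 1$. Summing over components then yields
\[ e_R(G) \ge n - m, \]
where $m$ denotes the number of deficient components of $G_R$ at the moment just before the violating query. Thus it suffices to prove $m \le 5$.

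I would then argue by contradiction: assuming $m \ge 6$, I would exhibit an answer to the present query $q$ that simultaneously maintains Conditions 1, 2, 3, and 4, contradicting the hypothesis of the lemma. To set up, fix the almost-balanced two-coloring $S = (X,Y)$ supplied by Condition 4, with $|X| = (n+1)/2$ and a p3c vertex $p \in X$ lying in some deficient component $D_p$. The case analysis then mirrors Cases 2.1 and 2.2 from the proof of the even case, split further by whether $p \in q$. In the subcase $p \notin q$, I would have Adversary answer compatibly with $S$, possibly after re-flipping a bounded number of deficient components lying outside $q$ in order to accommodate Conditions 2 and 3; this leaves $p$ p3c in the unchanged $D_p$ and in the larger class, so Condition 4 survives with $S$ itself as witness. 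In the subcase $p \in q$, I would select a replacement p3c vertex $p'$ from one of the $m - 3 \ge 3$ deficient components untouched by $q$, and use the sign-flipping machinery below to build a post-query almost-balanced two-coloring $S'$ that places $p'$ in the larger class.

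The central tool will be the same sign-flipping principle underlying Propositions \ref{Hb}, \ref{sumdless1}, \ref{Hb2}, and \ref{sumdless12}: each deficient component contributes $\pm 1$ to the target sum $\sum_D \varepsilon_D \overline{d}(D)$, so six or more deficient components give Adversary enough freedom to hit the $\pm 1$ total required for almost-balance while also parking a chosen p3c vertex on the larger side. The hard part I anticipate is verifying that a single answer can simultaneously (a) be consistent with Condition 1, (b) respect Conditions 2 and 3, and (c) leave open a post-query sign assignment realising Condition 4. This requires essentially the same $\mathcal{H}_1/\mathcal{H}_{-1}$ bookkeeping as in Lemma \ref{viol23}; the reason the final constant improves from $n-10$ there to $n-5$ here is that we are no longer forced to concede red-edge weight to compensate for violations of Conditions 2 or 3, since those hold by assumption, so the accounting is strictly cheaper.
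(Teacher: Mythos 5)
There is a genuine gap. Your reduction of the lemma to the claim ``$m\le 5$, where $m$ is the number of deficient components'' is both stronger than what is needed and, more importantly, false in general. A deficient component is exactly a component with imbalance $1$ and $|V(D)|-1$ red edges (e.g.\ every isolated vertex is one), and at the moment Condition 4 must be violated there can be many of them. The paper's proof handles precisely this situation: in its Case 1) it shows only that $\cH_1=\emptyset$ while $|\cH_{-1}|$ may be arbitrarily large, and the loss of $|\cH_{-1}|$ red edges from those deficient components is \emph{compensated} by the surplus $d(E)-2\ge |\cH_{-1}|$ of a single high-imbalance component $E$ (whose existence and lower bound on $d(E)$ are extracted from the minimality of the imbalance $e$ of a coloring in $\cT$). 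Your inequality $e_R(G)\ge n-m$ discards exactly this surplus (components with $d(D)\ge 2$ contribute $|V(D)|+d(D)-2>|V(D)|$ red edges by Proposition \ref{number of edges 2}), so the bound you are left with cannot close without it.

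A second, structural omission: you never distinguish the two ways Condition 4 can fail, namely (1) no almost-balanced two-coloring consistent with any answer exists at all, versus (2) such colorings exist but none has a p3c ball in the larger class. The paper's argument is entirely organized around this dichotomy, and the component-flipping bookkeeping is different in the two cases (in case (2) one shows $\cH_1=\emptyset$ or $\cH_{-1}=\emptyset$, and then either $|\cH_1|\le 1$ or again a surplus component $D$ with $d(D)\ge|\cH_{-1}|+1$ appears). Your contradiction scheme --- ``with six deficient components Adversary can always answer consistently'' --- would at best address a version of case (2), and even there the obstruction is not the count of deficient components but which of them lie on which side of the fixed coloring and whether a p3c ball can be parked in the larger class after flipping. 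Finally, you do not account for the up to $4$ red edges contributed by the hypothetical last answer itself, which is where the paper's final constant $n-5$ (rather than $n-1$ or $n$) comes from.
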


\begin{proof}
Note that there are two ways of violating Condition 4:

\vspace{2mm}
1) there is no almost-imbalanced two-coloring of the balls consistent with the previous answers and an answer according to the Adversary's strategy.

\vspace{2mm}

2) There are such almost-imbalanced two-colorings, but in any of these colorings there are no p3c ball in the larger color class.

\vspace{2mm}

Suppose by contradiction that 1) would happen. Let $\cT$ be the set of those two-colorings of the balls that are consistent with the previous answers and with any possible answer for that query. 
%(By assumption the answer has to violate some of the Conditions. Now it can violate Condition 1. Probably there are some possible answer that Adversary can't answer for that query.) For a two-coloring with $X$ and $Y$ color-classes, define the imbalance of that coloring by $||X|-|Y||$. 

By the assumption, every two-coloring in $T \in \cT$ has imbalance at least $2$. Fix a two-coloring $T \in \cT$ with minimal imbalance $e$. Denote the two color-classes by $X$ and $Y$, where $|X|=\frac{n+e}{2}$ and $|Y|=\frac{n-e}{2}$. 
%So we know that there are no two-coloring in $\cT$ with imbalance less than $e$.

\vspace{2mm}

$\bullet$ Denote by $\cH_1$ the set of components with imbalance $1$ and with larger class in $X$.  
If there exists $D \in \cH_1$, then we can change the color-classes of $D$, and we get a two-coloring $\cT$ with imbalance $e-2$, which is a contradiction. So $\cH_1=\emptyset$.

\vspace{2mm}

$\bullet$ We know that $|X|>|Y|$, so there must be a component with larger class in $X$, denote it by $E$ (and its imbalance by $d(E)$). If we change the color classes of $E$, the size of the classes of the new two-coloring are $\frac{n+e}{2}-d(E)$ and $\frac{n-e}{2}+d(E)$. The previous cannot be the larger, because it would give us a two-coloring from $\cT$ with imbalance $e-2d(E)<e$. So, $\frac{n-e}{2}+d(E)>\frac{n+e}{2}-d(E)$ and the imbalance of the new two-coloring is $2d(E)-e\ge e$.

But we can say more. Denote by $\cH_{-1}$ the set of components with imbalance $1$ and with larger class in $Y$. If we can change the color-classes of some components from $\cH_{-1}$, we still cannot go under the imbalance of $e$. So if we change every component in $\cH_{-1}$, the size of the color-classes are $\frac{n+e}{2}-d(E)+|\cH_{-1}|$ and $\frac{n-e}{2}+d(E)-|\cH_{-1}|$, where the latter is the larger. Thus the imbalance of that two-coloring is
$$\left(\frac{n-e}{2}+d(E)-|\cH_{-1}|\right)-\left(\frac{n+e}{2}-d(E)+|\cH_{-1}|\right)=2d(E)-2|\cH_{-1}|-e.$$
This cannot be larger than $e$, so we have $2d(E)-2|\cH_{-1}|-e\ge e$, which gives us
$$d(E) \ge |H_{-1}|+e \ge |H_{-1}|+2.$$

Now we count the red edges. In every component $D\not\in \cH_{1}\cup\cH_{-1}\cup\{E\}$, there are at least $|V(D)|$ red edges. In every component $D\in \cH_{1}\cup\cH_{-1}$, there are at least $|V(D)|-1$ red edges. In $E$ there are at least $|V(E)|+(d(E)-2)$ red edges. That gives us altogether at least
$$n-|\cH_{1}|-|\cH_{-1}|+(d(E)-2) \ge n-0-|\cH_{-1}|+|\cH_{-1}|+2-2=n$$
red edges.

It is possible that we counted some extra red edges because of our potential (last) answer. It is easy to see, that this error is at most 4, so before the answer we already had $n-4$ red edges.

\vspace{2mm}

Suppose we are in case 2). As in the previous case, we consider a set $\cT$ of two-colorings with imbalance 1 that are consistent with the previous answers and a potential answer for that query. By the assumption, there is no two-coloring in $T \in \cT$ with p3c balls in the larger color class. $\cT$ is not empty, because that would give us case 1). Fix a two-coloring from $\cT$ with color classes $X$ and $Y$, where $|X|=|Y|+1$. Again, denote by $\cH_1$ the set of components with imbalance 1 and larger class in $X$. Denote $\cH_{-1}$ the set of components with imbalance 1 and larger class in $Y$.

\vspace{2mm}

$\bullet$ If there exists $A\in\cH_1$ and $B\in\cH_{-1}$, then we can change the color classes of $A$ and $B$. This would give us a two-coloring $T \in \cT$, but there would be two p3c balls in the larger color class, which is a contradiction. So either $\cH_1=\emptyset$ or $\cH_{-1}=\emptyset$.

\vspace{2mm}

$\bullet_1$ If $\cH_{-1}=\emptyset$ and $A$ and $B$ are two distinct components from $\cH_1$, then we could change the color classes of $A$, which would give us a two-coloring from $T$. But now the other class is the bigger, where we can find a p3c ball in $B$. This is a contradiction, so if $\cH_{-1}=\emptyset$, then $|\cH_{1}|\le 1$. That gives us we have at least $$n-|\cH_{-1}|-|\cH_{1}|\ge n-1$$ red edges. It is possible that we counted some (at most 4) extra red edges with the potential (last) answer, so there are at least $n-5$ red edges in $G$ before the answer.

\vspace{2mm}

$\bullet_2$ Now assume that $\cH_1=\emptyset$. $X$ is the larger color class, so there must be a component $D$ with its larger class in $X$. Obviously $D\not\in \cH_{-1}$. (Let us denote the imbalance of $D$ by $d(D)$.)

$\bullet_{2.1}$Assume that $d(D)\le |\cH_{-1}|$. Then we can change the color classes of $D$ and $d(D)$ components from $\cH_{-1}$ to get a two-coloring from $T$. But there would be $d(D)$ p3c balls in the larger color class $X$, which is a contradiction. 

$\bullet_{2.2}$ We have $d(D)\ge |\cH_{-1}|+1$. Then the number of red edges is at least
$$n-|\cH_{1}|-|\cH_{-1}|+(d(D)-2) \ge n-1.$$

Again, we possibly counted at most 4 extra edges because of the potential last answer, so before it, there were at most $n-5$ red edges.

We are done with the proof of Lemma \ref{viol4}.

\end{proof}

As we have already mentioned, in a coloring provided by Condition 1, there is no plurality ball. In a coloring provided by Condition 4, there are plurality balls. So if the Questioner can solve the Plurality problem, then Adversary must violate Condition 4 at some point. 

Now consider the first query $Q$, for which he had to answer in a way that he violated Condition 2, 3, or 4. This is the end of Phase 1. Before $Q$, there were at least one almost-imbalanced two-coloring with at least one p3c ball in the larger color class. Fix such a two-coloring ($[n]=X'\cup Y'$, $|X'|=\frac{n+1}{2}$, $|Y'|=\frac{n-1}{2}$), and choose a p3c ball $z\in X'$ with red-degree at most $2\log_2(n)$. Such a ball exists according to Lemma \ref{logdegree} b) and because of $|V(D)|\le n$.

As we have noted, the three-coloring $X=X'\setminus\{z\}$, $Y=Y'$, $Z=\{z\}$ is consistent with the previous answers and Condition 1. Now we fix this three-coloring, and Adversary will answer according to it starting with the answer for $Q$.

We also make some changes in the auxiliary graph. Note that the red edges from $z$ are in pairs because they come from the queries involving $z$. Let us denote them by $e_{1,1},e_{1,2},e_{2,1},e_{2,2},\dots, e_{l,1},e_{l,2}$, where $2l$ is the red-degree of $z$. Note that we have $l\le \log_2(n)$.

Now we define a new color of the edges. Green edge simply means that it comes from $z$. Basically, they are special red edges.
%, but we will see the difference.

We change the colors of $e_{1,1},e_{2,1},\dots, e_{l,1}$ from red to green, and we delete the edges $e_{1,2},e_{2,2},\dots, e_{l,2}$ from the graph.

It is the end of Phase 1. By Lemma \ref{viol23} and \ref{viol4}, there were at least $n-10$ red edges before $Q$, so now we have at least $n-10-2l\ge n-10-2\log_2(n)$ red edges.

\vspace{2mm}

\textbf{Phase 2:}
After $Q$, Adversary must not violate Condition 1, but he can violate again any of Condition 2, 3, or 4. We can assume that his answers correspond to the previously fixed coloring $X$, $Y$, $Z$.

\vspace{2mm}

We will have some new rules in the auxiliary graph in Phase 2. If the Questioner asks the query $\{u, v, z\}$, where $u$ and $v$ have the same color, then Adversary adds the edge $uv$ and colors it blue, and adds one of $uz$ and $vz$, colored green. (That is also how he changed the auxiliary graph at the end of Phase 1.) If Questioner asks the query $\{u, v, z\}$, where $u$ and $v$ have different colors, he adds $uz$ and $vz$ and colors both edges green, but does not add the edge $uv$. If Questioner asks a query disjoint from $z$, the rules do not change. That allows him to give the green edges weight $\frac{1}{2}$, so every answer has a total weight of 1.

We proved that at the end of Phase 1, there were at least $n-2\log_2 n-10$ red edges, which did not decrease in Phase 2. Note that we can apply Lemma \ref{oddgreenblue} also in this setting, thus at the end of Phase 2 there are altogether at least $n-5$ blue and green edges. Give weight $\frac{1}{4}$ to the red edges and $\frac{1}{2}$ to the blue and green edges, so every answer has weight 1. That means the number of questions is at least $\frac{1}{4}(n-2\log_2(n)-10)+\frac{1}{2}(n-5)=\frac34n-\frac12\log_2(n)-5$ which proves Theorem \ref{m1a} \textbf{(ii)}.

\qed
% piros-fa komponens, potenciális zöld csúcsok

% condition-ök, lépések változása

% piros-fában a pot zöldek foka

% C4 sérül

% C2/C3 sérül

% súlyozás, kérdések száma

\section{Remarks, Questions}

$\bullet$ Our Theorem \ref{m1a} \textbf{(i)} gives an exact result for $n=4k+2$, and an almost exact result (it can be one of two values) for $n=4k$, while Theorem \ref{m1a} \textbf{(ii)} gives an asymptotic result for $n$ odd. So first of all it would be very interesting to prove an exact result for $n=4k, 4k+1, 4k+3$. Theorem \ref{general} provides bounds for $k \ge 4$. It would be nice to prove some asymptotics or even sharper results for $A_p(n,k)$ for $k \ge 4$.
\vspace{2mm}

\noindent
$\bullet$ Our model is just one of the possible models with three colors. One can also imagine other answers, or the model can be non-adaptive. 

\vspace{2mm}

\noindent
$\bullet$ In this article we investigated a model with three colors, however all Plurality problems can be asked with more colors. We note that the more colors one has, the more models can be imagined, as there can be more possible answers.

\vspace{2mm}

\noindent
$\bullet$ If we have two colors, then - as we mentioned in the introduction - we get the so called Majority problem. It would be interesting to apply (some variant of) the techniques we introduced here for those problems.

%\vspace{2mm}

%\noindent
%$\bullet$ In Lemma \ref{logdegree} one can prove that $2(\left\lfloor\log_2(|V(D)|+1)\right\rfloor-1)$ is also true, but the Questioner can ask in a way that it ((marmint micsoda??)) cannot be decreased with this strategy of the Adversary.

\section*{Acknowledgement}

We would like to thank all participants of the Combinatorial Search Seminar at the Alfr\'ed R\'enyi Institute of Mathematics for fruitful discussions.

\vspace{3mm}

\noindent
Research of Gerbner was supported by the J\'anos Bolyai Research Fellowship of the Hungarian Academy of Sciences and by the National Research, Development and Innovation Office -- NKFIH, grant K 116769.

\noindent
Research of Vizer was supported by the National Research, Development and Innovation Office -- NKFIH, grant SNN 116095.

\end{document}